\newtheorem{theorem}{Theorem}
 \newtheorem{lem}{Lemma}
 \numberwithin{equation}{section}
 \newtheorem{fac}{F}
\newtheorem{ax}{A}
\newtheorem{hax}{H}
\newtheorem{Def}{Definition}
\begin{document}

%
%
%
%
%
%
%
%
%

\title[Steiner-Lehmus and congruent medians]
 {The Steiner-Lehmus theorem and ``triangles with congruent medians are
 isosceles" hold in weak geometries}

\author{Victor Pambuccian}

\address{%
School of Mathematical and Natural Sciences (MC 2352)\\
Arizona State University - West Campus\\
P. O. Box 37100\\
Phoenix, AZ 85069-7100\\
U.S.A.}

\email{pamb@asu.edu}

\author{Horst Struve}

\address{%
Seminar f\"ur Mathematik und ihre Didaktik\\
 Universit\"at zu K\"oln\\
 Gronewaldstra\ss e 2\\
50931 K\"oln\\
Germany}

\email{h.struve@uni-koeln.de}

\author{Rolf Struve}

\address{%
SIGNAL IDUNA Gruppe\\
Joseph-Scherer-Stra\ss e 3\\
44139 Dortmund\\
Germany}

\email{rolf.struve@signal-iduna.de}


\subjclass{Primary 51F05; Secondary 51F15}

\keywords{The Steiner-Lehmus theorem, Bachmann's standard ordered
metric planes, Hjelmslev planes}


\begin{abstract}
We prove that (i) a generalization of the Steiner-Lehmus theorem due
to A. Henderson holds in Bachmann's standard ordered metric planes,
(ii) that a variant of Steiner-Lehmus holds in all metric planes,
and (iii) that the fact that a triangle with two congruent medians
is isosceles holds in Hjelmslev planes without double incidences of
characteristic $\neq 3$.
\end{abstract}
\maketitle

\section{Introduction}

The Steiner-Lehmus theorem, stating that a triangle with two
congruent interior bisectors must be isosceles, has received over
the 170 years since it was first proved in 1840 a wide variety of
proofs. Some of those provided within the first hundred years have
been surveyed in \cite[p.\ 131-134]{sim}, \cite{mac1}, \cite{mac2},
and \cite{sl}: most use results of Euclidean geometry. Several
proofs have been provided for foundational reasons, being valid in
Hilbert's absolute geometry (the geometry axiomatized by the plane
axioms of the groups I, II, and III of \cite{hil99}), the first one
being provided by Tarry \cite{tar}, the second one by Blichfeldt
\cite{bli1}, the third one in \cite[p.\ 125]{cas}, attributed to
Casey, and with the mention that H. G. Forder ``points out that this
proof is independent of the parallel postulate", and the fourth one
--- which, we are told, ``excels most by being ``absolute''" and
``came in a letter from H. G. Forder" --- in (\cite[p.\
460]{cox69}). The simplest proof among these is the one provided by
Descube in \cite{des}, and repeated, without being aware of
predecessors, by Tarry in \cite{tar} (and for congruent symmedians
in \cite{tar2}), and then in \cite[p.\ 124-125]{cas}, \cite{jme},
and \cite{h}, is valid not only in Hilbert's absolute planes, but in
more general geometries as well. While Blichfeldt's, Casey's, and
Forder's proofs rely on the free mobility property of the Hilbertian
absolute plane (the segment and angle transport axioms), Descube's
proof can be rephrased inside the geometry of a special class of
Bachmann's ordered metric planes, in which no free mobility
assumptions are made (and thus not all pairs of points need to have
a midpoint, and not all angles need to be bisectable), but in which
the foot of the perpendicular to the hypotenuse needs to lie between
the endpoints of that hypotenuse, to be referred to as {\em standard
ordered metric planes} .

On the other hand, the Steiner-Lehmus theorem has been generalized,
in the Euclidean setting, by  A. Henderson \cite[p.\ 265, 272,
Generalized Theorem (7)]{hen0} (repeated, without being aware of
\cite{hen0}, in \cite{np}, \cite{rus}, \cite{woy}, and \cite{ox}) by
replacing the requirement that two internal bisectors be congruent
by the weaker one that two internal Cevians which intersect on the
internal angle bisector of the third angle be congruent.

The purpose of this note is to present a proof, along the lines of
Descube's proof, of Henderson's generalization of the Steiner-Lehmus
theorem in an axiom system for standard ordered metric planes. Since
the statement of the generalized Steiner-Lehmus theorem presupposes
both notions of order
--- so one can meaningfully refer to ``internal bisector" (without mentioning that
the bisector is internal, the statement is false, see \cite{hen},
\cite{yze}, \cite{haj}, \cite{kh} and \cite{ah} for the generalized
version we shall prove) --- and metric notions
--- so that one can meaningfully refer to `` angle bisector'',
``congruent'' segments, and to an ``isosceles triangle''  --- the
setting of Bachmann's ordered metric planes represents the weakest
absolute geometry in which the Steiner-Lehmus theorem or its
generalization can be expected to hold.

The assumpton that the ordered metric plane be {\em standard} is
very likely not needed for the generalized Steiner-Lehmus theorem to
hold, but it is indispensable for our proof.

We will also present a short proof inside the theory of Hjelmslev
planes of the second of the ``pair of theorems'' considered in
\cite{bli2}, stating that a triangle with congruent medians must be
isosceles

\section{The axiom system for standard ordered metric planes}

For the reader's convenience, we list the axioms for ordered metric
planes, in a language with one sort of individual variables,
standing for {\em points}, and two predicates, a ternary one $Z$,
with $Z(abc)$ to be read as ``the point $b$ lies strictly between
$a$ and $c$'' ($b$ is not allowed to be equal to $a$ or to $c$) and
a quaternary one $\equiv$, with $ab\equiv cd$ to be read as ``$ab$
is congruent to $cd$''.  To improve  the readability of the axioms,
we will use the two abbreviations $\lambda$ and $L$, defined by

\begin{eqnarray*}
\lambda(abc) & :\Leftrightarrow & Z(abc)\vee Z(bca)\vee Z(cab),\\
L(abc) & :\Leftrightarrow & \lambda(abc)\vee a=b \vee b=c \vee c=a.
\end{eqnarray*}

 with $\lambda(abc)$ to be read as ``$a$, $b$, and $c$ are three
different collinear points'' and $L(abc)$ to be read as ``$a$, $b$,
and $c$ are collinear points (not necessarily different).'' Although
we have only points as variables, we will occasionally refer to {\em
lines}, with the following meaning: ``point $c$ {\em lies on the
line determined by} $a$ and $b$'' is another way of saying $L(abc)$,
and the line determined by $a$ and $b$ will be denoted by $\langle
a, b \rangle$. The axiom system for ordered metric planes consists
of the lower-dimension axiom $(\exists abc)\, \neg L(abc)$, which we
will not need in our proof, as well as the following axioms:

\begin{ax}\label{a2}
$Z(abc)\rightarrow Z(cba),$
\end{ax}
\begin{ax} \label{a2'}
$Z(abc)\rightarrow \neg Z(acb),$
\end{ax}
\begin{ax}\label{a4}
$\lambda(abc)\wedge (\lambda(abd)\vee b=d)\rightarrow
(\lambda(cda)\vee c=d),$
\end{ax}
\begin{ax} \label{a3'}
$(\forall abcde)(\exists f)\, \neg L(abc)\wedge Z(adb)\wedge \neg
L(abe)\wedge c\neq e\wedge \neg \lambda (cde)$\\
\hspace*{10mm}$\rightarrow [(Z(afc)\vee Z(bfc))\wedge
(\lambda(edf)\vee f=e)],$
\end{ax}
\begin{ax} \label{k1}
$ab\equiv pq \wedge ab\equiv rs \rightarrow pq\equiv rs$,
\end{ax}
\begin{ax} \label{k2}
$ab\equiv cc \rightarrow a=b$,
\end{ax}
\begin{ax} \label{abba}
$ab\equiv ba\wedge aa\equiv bb$,
\end{ax}
\begin{ax} \label{norV1}
 $(\forall abca'b') (\exists^{=1} c')\,  [\lambda(abc) \wedge ab \equiv a'b' \rightarrow \lambda (a'b'c')
\wedge ac \equiv a'c' \wedge bc \equiv b'c']$,
\end{ax}
\begin{ax} \label{norV2}
$\neg L(abx) \wedge L(abc) \wedge L(a'b'c') \wedge ab \equiv a'b'
 \wedge bc \equiv b'c' \wedge ac \equiv a'c'\\
\hspace*{17mm} \wedge  ax \equiv a'x' \wedge bx \equiv b'x'
\rightarrow xc \equiv x'c'$,
\end{ax}
\begin{ax} \label{norV3}
$(\forall abx) (\exists^{=1}  x')\, [\neg L(abx) \rightarrow x' \neq
x \wedge ax \equiv ax' \wedge bx \equiv bx']$,
\end{ax}
\begin{ax} \label{norV5}
$(\forall abxx') (\exists y)\, [\neg L(abx) \wedge x' \neq x \wedge
ax \equiv ax' \wedge bx \equiv bx'$\\
\hspace*{17mm} $\rightarrow L(aby) \wedge Z(xyx')]$.
\end{ax}

Axioms A\ref{a2}-A\ref{a3'} are axioms of ordered geometry,
A\ref{a3'} being the Pasch axiom. If we were to add the
lower-dimension axiom and $(\forall ab)(\exists c)\, a\neq b
\rightarrow Z(abc)$ to A\ref{a2}-A\ref{a3'}, we'd get an axiom
system for what Coxeter \cite{cox69} refers to as {\em ordered
geometry}. That we do not need the axiom stating that the order is
unending on all lines follows from the fact that, given two distinct
points $a$ and $b$, by (4.3) of \cite{sor}, there is a perpendicular
in $b$ on the line $\langle a, b \rangle$, and the reflection $c$
(which exists and is unique by A\ref{norV3}) of $a$ in that
perpendicular line is, by A\ref{norV5}, such that $Z(abc)$.

Axioms A\ref{k1}-A\ref{abba} are axioms K1-K3 of \cite{sor},
A\ref{norV1} is W1 of \cite{sor}, A\ref{norV2} is W2 of \cite{sor},
A\ref{norV3} is W3 of \cite{sor}, and A\ref{norV5} is the
conjunction of axioms W4 and WA of \cite{sor}. The reflection of
point $x$ in line $\langle a, b\rangle$, whose existence is ensured
by A\ref{norV3}, will be denoted by $\sigma_{ab}(x)$, and the
reflection of $a$ in $b$, which exists as W5 of \cite{sor} holds in
our axiom system, will be denoted by $\varrho_{b}(a)$.

That the lower-dimension axiom together with A\ref{a2}-A\ref{norV5}
form an axiom system for ordered (non-elliptic) metric planes was
shown in \cite{sor}. The theory of metric planes has been studied
intensely in \cite{bach}, where two axiom systems for it can be
found. Two other axiom systems were put forward in \cite{pambbpas}
(for the non-elliptic case only) and \cite{pambss}.

The models of ordered metric planes have been described
algebraically in the case with Euclidean metric (i.\ e.\ in planes
in which there is a rectangle) in \cite{bach3} and \cite[\S
19]{bach}, and in the case with non-Euclidean metric in
\cite{pej64}.

We will be interested in a class of ordered metric planes in which
no right angle can be enclosed within another right angle with the
same vertex, or, expressed differently, in which the foot $c$ of the
altitude $oc$ in a right triangle $oab$ (with right angle at $o$)
lies between $a$ and $b$, i.~e.\ in ordered metric planes that
satisfy

\begin{ax} \label{fusspunkt}
$Z(aoa')\wedge oa\equiv oa'\wedge ba\equiv ba'\wedge o\neq b \wedge \lambda(abc)\wedge Z(bcb')\\
\hspace*{17mm} cb\equiv cb' \wedge ob \equiv ob'\rightarrow Z(acb).$
\end{ax}

To shorten statements,  we denote the foot of the perpendicular from
$a$ to line $\langle b,c \rangle$ by $F(bca)$.

That A\ref{fusspunkt} is equivalent to the statement ${\bf RR}$,
that no right angle can be enclosed within another right angle with
the same vertex, can be seen by noticing that (i) if, assuming the
hypothesis of A\ref{fusspunkt} we have, instead of its conclusion,
that $Z(abc)$ holds, then the half-line
$\stackrel{\longrightarrow}{ox}$ with origin $o$ of the
perpendicular in $o$ to $\langle o, c\rangle$ that lies in the same
half-plane determined by $\langle o, b\rangle$ as $a$ must lie
outside of triangle $oac$ (if it lied inside it,
$\stackrel{\longrightarrow}{ox}$ would have to intersect its side
$ac$, so we would have two perpendiculars from $o$ to $\langle a,
c\rangle$, namely $\langle o, c\rangle$ and $\langle o, x\rangle$),
so the right angle $\angle aob$ is included inside the right angle
$\angle xoc$, so $\neg A\ref{fusspunkt}\Rightarrow \neg {\bf RR}$,
and (ii) if $\langle o, a\rangle\perp \langle o, d\rangle$ and
$\langle o, b\rangle\perp \langle o, c\rangle$ with
$\stackrel{\longrightarrow}{ob}$ and
$\stackrel{\longrightarrow}{oc}$ between
$\stackrel{\longrightarrow}{oa}$ and
$\stackrel{\longrightarrow}{od}$, with
$\stackrel{\longrightarrow}{oc}$ between
$\stackrel{\longrightarrow}{ob}$ and
$\stackrel{\longrightarrow}{od}$, then, with $e=F(odb)$, we must
have, by the crossbar theorem, that $\stackrel{\longrightarrow}{oc}$
intersects $eb$ in a point $p$, so that $F(obe)$ cannot lie on the
segments $ob$, for else the segments $eF(obe)$ and $op$ would
intersect, and from that intersection point there would be two
perpendiculars to  $\langle o, b\rangle$, namely $\langle e,
F(obe)\rangle$ and $\langle o, c\rangle$, so that $\neg {\bf RR}
\Rightarrow \neg A\ref{fusspunkt}$.

A\ref{fusspunkt} was first considered as an axiom in \cite[p.\
298]{bk}, in the analysis of the interplay between Sperner's
ordering functions and the orthogonality relation in affine planes.
Ordering functions satisfying A\ref{fusspunkt} are referred to in
\cite[p.\ 298]{bk} ``singled out by the orthogonality relation"
({\em durch die Orthogonalit\"at ausgezeichnet}). An axiom more general than
${\bf RR}$, stating that if an angle is enclosed within another angle with
the same vertex, then the two angles cannot be congruent, has been first considered
as axiom III,7 in \cite[p.\ 31]{ber}.

That ordered metric planes do not need to be standard, not even if
the metric is Euclidean (i.\ e.\ if there is a rectangle in the
plane), can be seen from the following example:

The point-set of the model is ${\mathbb Q}\times {\mathbb Q}$, with
the usual betwenness relation (i.~e.\ point ${\bf c}$ lies between
points ${\bf a}$ and ${\bf b}$ if and only if ${\bf c} = t{\bf a} +
(1-t){\bf b}$, with $0<t<1$, where  ${\bf a}$, ${\bf b}$, and ${\bf
c}$ are in ${\mathbb Q}\times {\mathbb Q}$ and $t$ is in ${\mathbb
Q}$) and with segment congruence $\equiv$ given by ${\bf ab}\equiv
{\bf cd}$ if and only if $\|{\bf a} - {\bf b}\|=\|{\bf c} - {\bf
d}\|$,  where $\|{\bf x}\|$ stands for $x_1^2-2x_2^2$ where ${\bf
x}=(x_1, x_2)$. Two lines $ux+vy+w=0$ and $u'x+v'y+w=0$ are
orthogonal if and only if $-2uu'+vv'=0$, and $-2$ is called the {\em
orthogonality constant} of the Euclidean plane (see \cite{bach3},
\cite{schr}, \cite{sp}, \cite{pamb4}, and  \cite{pambjsl} for more
on Euclidean planes, and \cite[p.\ 300]{bk} for a general result
regarding Sperner's ordering function that satisfy A\ref{fusspunkt}
in Euclidean planes). If ${\bf o}=(0,1)$, ${\bf a}= (1,0)$, and
${\bf b}=(2,0)$, then the $\widehat{aob}$ is a right angle, and the
foot $c=(0,0)$ of the perpendicular from $o$ to line $ab$ does not
lie between $a$ and $b$.

However, all ordered Euclidean planes with free mobility (i.\ e.\
those that can be coordinatized by Pythagorean fields) must be
standard (see \cite[p.\ 217]{bach}), as must be all absolute planes
in Hilbert's sense.

\section{Proof of the Steiner-Lehmus theorem in standard ordered metric planes}

Before starting the proof (a variant of Descube's proof) of the
Steiner-Lehmus theorem, we will first state it inside our language,
and then list without proof, the proofs being straightforward, a
series of results true in standard ordered metric planes.

With the abbreviation $M(abc)$ standing for $Z(abc)\wedge ba\equiv
bc$, to be read as ``$b$ is the midpoint of the segment $ac$'', the
generalized Steiner-Lehmus  theorem can be stated as

\begin{eqnarray} \label{sl}
 \neg L(abc)\wedge Z(amc)\wedge Z(anb)\wedge ad\equiv ab \wedge
(Z(adc)\vee Z(acd)\vee d=c)\\ \nonumber \wedge sb\equiv sd \wedge
Z(bsm)\wedge Z(csn)\wedge  bm\equiv cn\\ \nonumber \wedge
M(mpn)\wedge M(boc) \rightarrow ab\equiv ac.\\ \nonumber
\end{eqnarray}

Notice that we assume the existence of two midpoints: the midpoint
$p$ of the segment $mn$ and the midpoint $o$ of the segment $bc$.
These midpoints do exist whenever the triangle $abc$ is isosceles,
but do not have to exist in general.

{\em Notation}. Whenever the segment $bc$ has a midpoint, we define
$ab<ac$ to mean that the perpendicular bisector of  $bc$ intersects
the  open  segment $ac$. We say that the lines $\langle a, b
\rangle$ and $\langle c,d \rangle$ are perpendicular ($\langle a, b
\rangle\perp \langle c,d \rangle$) if there are two different points
$x$ and $x'$ on $\langle c,d \rangle$ such that $ax\equiv ax'$ and
$bx\equiv bx'$. We denote by $\angle xoy$ the angle formed by the
rays $\stackrel{\longrightarrow}{ox}$  and
$\stackrel{\longrightarrow}{oy}$. We say that ``$\angle abc$ is
acute'' if $\neg L(abc)$ and if $\stackrel{\longrightarrow}{bc}$
lies between   $\stackrel{\longrightarrow}{ba}$ and
$\stackrel{\longrightarrow}{bb'}$, where $\langle b, b'\rangle\perp
\langle b, a\rangle$, with $b'$ on the same side of $\langle a,
b\rangle$ as $c$. A point $p$ lies in the {\em interior} of $\angle
xoy$ if $\stackrel{\longrightarrow}{op}$ intersects the open segment
$xy$. We say that a segment $ab$ can be {\em transported}  from $c$
othe ray $\stackrel{\longrightarrow}{cd}$, if there is a point  $x$
on $\stackrel{\longrightarrow}{cd}$
(to be referred as the second endpoint of the transported segment) such that $ab\equiv cx$.\\

The facts that we will need for its proof, which will be stated
without proof, their proofs being either well known or
straightforward, are:

\begin{fac} \label{(F1)} If $\neg L(abu)$, $ba\equiv bu$, $m\neq b$, and $ma\equiv mu$,
then, for every point $p$ with $L(bmp)$, we have $px\equiv py$,
where $x=F(bap)$ and $y=F(bup)$  (points on the angle bisector of an
angle are equidistant from the legs of the angle). Also, $bx\equiv
by$.
\end{fac}


\begin{fac} \label{(F4)} If $\angle bac$ is acute, then $\angle cab$ is acute as well, and any angle with the same vertex $a$ and inside  $\angle bac$
is also acute. If the triangles $abc$ and $a'b'c'$ are congruent
(i.~e.\ $ab\equiv a'b'$, $bc\equiv b'c'$, and $ca\equiv c'a'$), and
$\angle bac$ is acute, then so is $\angle b'a'c'$.
\end{fac}

\begin{fac} \label{(F5)}  If $a$ and $a'$ have a midpoint, $Z(abc)$, $Z(a'b'c')$,
$ac\equiv a'c'$, $ab\equiv a'b'$, then $bc\equiv b'c'$ (a special
case of Euclid's Common Notion III ``If equals are subtracted from
equals, then the remainders are equal.")
\end{fac}


\begin{fac} \label{(F6)}  The betweenness relation is preserved under orthogonal
projection, i.\ e.\ if $Z(oa'b')$, $L(oab)$, and $\langle a,
a'\rangle$ and $\langle b, b'\rangle$ are perpendicular on $\langle
a, b \rangle$, then $Z(oab)$.
\end{fac}

\begin{fac} \label{(F7)}
The base angles of an isosceles triangles are  acute, i.~e.\ if
$\neg L(abc)$, $ab\equiv ac$, $\langle b, b'\rangle \perp \langle b,
c\rangle$ and $\langle c, c'\rangle \perp \langle c, b\rangle$, and
$b'$ and $c'$ lie on the same side of $\langle b, c\rangle$ as $a$,
then $\stackrel{\longrightarrow}{ba}$ lies between
$\stackrel{\longrightarrow}{bc}$ and
$\stackrel{\longrightarrow}{bb'}$ and
$\stackrel{\longrightarrow}{ca}$ lies between
$\stackrel{\longrightarrow}{cb}$ and
$\stackrel{\longrightarrow}{cc'}$.
\end{fac}

\begin{fac} \label{(F8)}   If $ab\equiv a'b'$, $ac\equiv a'c'$, $Z(abc)$, and
$Z(a'b'c')\vee Z(a'c'b')$, then $Z(a'b'c')$.
\end{fac}


\begin{fac} \label{(F10)} If $\angle bac$ has an interior angle bisector, then any
segment $xy$ on $\langle a, b\rangle$ can be transported on any
halfline that is included in $\langle a,c\rangle$, i.~e.\ for all
$x, y$ with $x\neq y$, $L(abx)$ and $L(aby)$, and any $u, v$ with
$L(acu)$ and $L(acv)$ and $u\neq v$, there exists a $z$ with
$uz\equiv xy$ and $Z(vuz)$.
\end{fac}

\begin{fac} \label{(F11)} If $ab\equiv a'b'$, with $a\neq b$, and if the lines $\langle
a, b\rangle$ and $\langle a', b'\rangle$ intersect, then the two
lines have an angle bisector (as both segments $ab$ and $a'b'$ can
be transported along the lines $\langle a, b\rangle$ and $\langle
a', b'\rangle$ respectively (via A\ref{norV1}) to their intersection
point).
\end{fac}

\begin{fac} \label{(F12)} If $\neg L(abc)$, $ab\equiv a'b'$, $ac\equiv a'c'$, $cb \equiv c'b'$, $L(adc)$,
$ad\equiv a'd'$, $dc\equiv d'c'$, then $dF(abd)\equiv d'F(a'b'd')$.
\end{fac}

\begin{fac} \label{(F13)} If $p_1$ and $p_2$ are two distinct points on the same side of the line
$\langle a, b\rangle$, and $p_1F(abp_1)\equiv p_2F(abp_2)$, then the
lines $\langle p_1, p_2\rangle$ and $\langle a, b\rangle$ do not
meet (given that the perpendicular from the point of intersection of
segments $p_1F(abp_2)$ and $p_2F(abp_1)$ to $\langle a, b\rangle$ is
perpendicular to $\langle p_1, p_2\rangle$  as well).
\end{fac}

We will also need the following lemmas:

\begin{lem} \label{l1}
$\neg L(bsc)\wedge Z(bxs)\wedge
Z(cxm')\wedge Z(csn)$\\
$\wedge cn\equiv cm' \rightarrow bm'<bn$
\end{lem}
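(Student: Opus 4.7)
The plan is to exploit the isosceles triangle $cm'n$ (using $cm' \equiv cn$) to identify the perpendicular bisector $\ell$ of $m'n$ with the internal bisector of $\angle m'cn$, and then to trace $\ell$ through the configuration: starting at $c$, $\ell$ should first cross the sub-segment $xs$ of $bs$, and a second application of Pasch should then force it out of triangle $bsn$ through the open side $bn$, which is exactly the conclusion $bm' < bn$.

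For the preliminaries, I would first note that $\neg L(bsc)$ makes the rays $\stackrel{\longrightarrow}{cx}$ and $\stackrel{\longrightarrow}{cs}$ distinct, so $Z(cxm')$ and $Z(csn)$ yield $m' \neq n$ and $\angle m'cn = \angle xcs$. By F\ref{(F11)} applied to $cm' \equiv cn$ meeting at $c$, the angle $\angle m'cn$ has a bisector $\ell$ through $c$. Reflecting $m'$ across $\ell$ produces a point of $\langle m', n \rangle$ at distance $cm' = cn$ from $c$ and different from $m'$; the uniqueness clauses of A\ref{norV1} and A\ref{norV3} then force this reflected point to be $n$. Consequently $\ell$ is simultaneously the perpendicular bisector of $m'n$ and passes through the midpoint $q$ of $m'n$, so that the relation $bm' < bn$ is actually meaningful.

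The main argument consists of two uses of A\ref{a3'}. Since $m'$ and $n$ lie on opposite sides of the perpendicular bisector $\ell$, and since $c \in \ell$ together with $Z(cxm')$ and $Z(csn)$ places $x$ on the ray $\stackrel{\longrightarrow}{cm'}$ and $s$ on the ray $\stackrel{\longrightarrow}{cn}$, the points $x$ and $s$ also lie on opposite sides of $\ell$; hence $\ell$ meets the open segment $xs$ at a point $y$. Combined with $Z(bxs)$ this yields $Z(bys)$. Now apply A\ref{a3'} to triangle $bsn$, which is non-degenerate because $n \in \langle c, s \rangle$ and $c \notin \langle b, s \rangle$ force $n \notin \langle b, s \rangle$; the relevant interior point on $bs$ is $y$, the off-line point is $c$, and $c,y,n$ are non-collinear since $\langle c, s \rangle \cap \langle b, s \rangle = \{s\}$ and $y \neq s$. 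The axiom produces $f \in \langle c, y \rangle = \ell$ with $Z(bfn)$ or $Z(sfn)$. The second alternative would place $f \in \ell \cap \langle c, s \rangle = \{c\}$, and $f = c$ would force $Z(scn)$, contradicting $Z(csn)$. Hence $Z(bfn)$, so $\ell$ meets the open segment $bn$, which is the definition of $bm' < bn$.

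The main obstacle I anticipate is the preliminary step: establishing, within the weak standard-ordered-metric setting where midpoints need not exist in general, that the angle bisector of $\angle m'cn$ coincides with the perpendicular bisector of $m'n$ and that $m'n$ in fact has a midpoint. This rests on the reflection-plus-uniqueness argument described above; once it is in place, the two Pasch applications that drive $\ell$ across $xs$ and then across $bn$ are routine.
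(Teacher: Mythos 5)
Your proof is correct and follows essentially the same route as the paper's: identify the perpendicular bisector of $m'n$ as a line through $c$ (the paper takes it directly as $\langle c, F(nm'c)\rangle$, you reach it via the angle bisector from F\ref{(F11)} plus a reflection--uniqueness argument), show it crosses the segment $xs$ (the paper via the crossbar theorem, you via plane separation), and then apply the Pasch axiom to $\triangle bsn$ to force a crossing of the open segment $bn$. The differences are only in how these two standard crossings are justified, not in the structure of the argument.
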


\begin{proof}
Since $cn\equiv cm'$,   $d=F(nm'c)$  is the midpoint of the segment
$m'n$, and thus $\langle d, c\rangle$ is the perpendicular bisector
of the segment $m'n$. By the crossbar theorem the ray
$\stackrel{\longrightarrow}{cd}$ intersects the segment $xs$ in a
point $p$. By the Pasch theorem applied to $\Delta bsn$ and secant
$\langle d, p\rangle$, we get that the latter must intersect the
segment $bn$, thus $bm<bn$.
\end{proof}



\begin{lem} \label{l2}
$M(boc)\wedge \neg L(abc)\wedge Z(ab'c)\wedge ab\equiv
ab'\rightarrow ab<ac.$
\end{lem}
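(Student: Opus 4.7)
Let $\ell$ be the perpendicular bisector of $bc$, that is, the unique line through $o$ orthogonal to $\langle b,c\rangle$; the aim is to show that $\ell$ meets the open segment $ac$, which is the very definition of $ab<ac$. The strategy is a case analysis based on the Pasch axiom \textrm{A\ref{a3'}} applied to the triangle $abc$ and $\ell$: since $\ell$ passes through the interior point $o$ of side $bc$ and is not equal to $\langle b,c\rangle$, it must exit the triangle through vertex $a$, through an interior point of $ab$, or through an interior point of $ac$. The last alternative gives the conclusion, so it suffices to rule out the first two.

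If $a\in\ell$, then $ab\equiv ac$, hence $ab'\equiv ac$; since both $b'$ (by $Z(ab'c)$) and $c$ lie on the ray $\stackrel{\longrightarrow}{ac}$ at distance $ab$ from $a$, the uniqueness clause of \textrm{A\ref{norV1}} forces $b'=c$, contradicting $Z(ab'c)$.

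Now suppose $\ell$ meets the interior of $ab$ at some $q$, so $Z(aqb)$ and $qb\equiv qc$. The triangle $\Delta aqc$ is non-degenerate (since $q\in\langle a,b\rangle$ and $\neg L(abc)$), and the plan is to obtain the strict inequality $ac<ab$ in its natural (``point on a ray at a given distance'') sense. Concretely, dropping the perpendicular from $q$ to $\langle a,c\rangle$ and splitting into cases on the position of its foot, an appeal to Fact~\ref{(F7)} together with axiom \textrm{A\ref{fusspunkt}} produces a point $c^*$ on the ray $\stackrel{\longrightarrow}{ab}$ with $ac^*\equiv ac$ and $Z(ac^*b)$ --- the axiomatic counterpart of $ac<aq+qc\equiv aq+qb\equiv ab$, the key equalities coming from $Z(aqb)$ and $qb\equiv qc$.

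The contradiction now follows from Fact~\ref{(F8)}: applied to the triples $(a,b',c)$ and $(a,b,c^*)$ with the congruences $ab'\equiv ab$ and $ac\equiv ac^*$, together with $Z(ab'c)$ and $Z(ac^*b)$ (the latter serving as the second disjunct of the hypothesis of~\ref{(F8)}), Fact~\ref{(F8)} concludes $Z(abc^*)$, which together with the already established $Z(ac^*b)$ violates \textrm{A\ref{a2'}}. Hence this case is impossible, and by elimination $\ell$ meets the interior of $ac$, proving the lemma. The hardest step is the axiomatic derivation of the strict triangle inequality that produces $c^*$; once $c^*$ is in hand, Fact~\ref{(F8)} closes the argument cleanly.
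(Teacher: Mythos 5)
Your overall skeleton (the Pasch trichotomy for the perpendicular bisector $\ell$ of $bc$, elimination of the vertex case, contradiction in the $ab$-case) is reasonable, and both the vertex case and the closing application of F\ref{(F8)} are correct as stated. The genuine gap is exactly where you place it: the production of $c^*$ on $\stackrel{\longrightarrow}{ab}$ with $ac^*\equiv ac$ and $Z(ac^*b)$. Observe what that step amounts to. In the paper's terminology the hypothesis $Z(ab'c)\wedge ab\equiv ab'$ says ``$ab<ac$ in the transport sense'' (a congruent copy of $ab$ fits strictly inside $ac$), while the conclusion of the lemma is ``$ab<ac$ in the perpendicular-bisector sense''. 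Your case hypothesis --- $\ell$ meets the open segment $ab$ in $q$ --- is precisely ``$ac<ab$'' in the perpendicular-bisector sense, and the point $c^*$ you need is precisely ``$ac<ab$'' in the transport sense. So the step you leave as a one-sentence sketch is the converse bridge between the two order notions, with $b$ and $c$ interchanged; it is of the same nature and weight as Lemma \ref{l2} itself, and reducing the lemma to its own converse does not prove it. Moreover, the sketch (``drop the perpendicular from $q$ to $\langle a,c\rangle$, split on the position of the foot, appeal to F\ref{(F7)} and A\ref{fusspunkt}'') does not indicate how the absence of free mobility is overcome: neither general segment transport nor the triangle inequality $ac<aq+qc$ is available off the shelf here, and F\ref{(F7)} concerns base angles of isosceles triangles, which by itself does not deliver the ``greater angle subtends greater side'' comparison your informal argument leans on.

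For contrast, the paper's proof sidesteps the triangle inequality entirely. It takes $o'=F(bb'a)$, the midpoint of $bb'$, notes that the midline $\langle o,o'\rangle$ of $\triangle bcb'$ has a common perpendicular with $\langle c,b'\rangle$ and therefore cannot meet it, locates the intersection $p$ of $\langle a,o'\rangle$ with $bc$ so that $Z(opb)$, and then shows that if $\ell$ met $ab$ in a point $s$, the orthogonal projections of $o'$, $a$, and $s$ onto $\langle p,b\rangle$ (via F\ref{(F6)} and A\ref{fusspunkt}) would force betweenness relations incompatible with $Z(opb)$. If you wish to salvage your route, you must supply a full axiomatic proof that $Z(aqb)\wedge qb\equiv qc$ yields a transported point $c^*$ with $ac^*\equiv ac$ and $Z(ac^*b)$; as it stands, that is an unproved lemma carrying the entire content of the result.
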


\begin{proof}
Let $o'=F(bb'a)$, i.\ e.\ the midpoint of segment $bb'$. Since
$\langle o, o'\rangle$ and $\langle c, b'\rangle$ have a common
perpendicular (by \cite[\S4,2, Satz 2]{bach}), they cannot
intersect, and thus, by the Pasch axiom, $\langle a, o'\rangle$
intersects the side $bc$ of $\triangle bcb'$ in a point $p$ with
$Z(ao'p)$ and $Z(bpc)$. Point $p$ cannot coincide with $o$, as
$\langle o, o'\rangle$ and $\langle c, b'\rangle$ do not intersect,
and it also cannot be such that $Z(cpo)$, for in that case the Pasch
axiom with $\Delta pac$ and secant $\langle o, o'\rangle$ would ask
the latter to intersect segment $ac$, contradicting the fact that
$\langle o, o'\rangle$ and $\langle a, c\rangle$ do not intersect.
Thus $Z(opb)$ must hold. Also by the Pasch axiom the perpendicular
bisector of segment $bc$ must intersect one of the segments $ac$ or
$ab$. Suppose it intersects segment $ab$ in $s$.  By
A\ref{fusspunkt}, with $f=F(pbo')$, we have $Z(pfb)$. Since we have
$Z(po'a)$ and $Z(asb)$, for $g=F(pba)$ and for   $o$, which is
$F(pbs)$, we have, by  F\ref{(F6)}, $Z(pfg)$ and $Z(gob)$. From
these two betweenness relations and $Z(pfb)$ we get that $Z(opb)$
cannot hold, a contradiction. Hence the perpendicular bisector of
$bc$ must intersect $ac$.
\end{proof}

\begin{lem} \label{l3}
$M(bab')\wedge o\neq a\wedge Z(abc)\wedge ob\equiv ob'\wedge
od\equiv ob \wedge (Z(odc)\vee Z(ocd))\rightarrow Z(odc)$.
\end{lem}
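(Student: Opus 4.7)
The plan is proof by contradiction. From $M(bab')$, $ob \equiv ob'$ and $o \neq a$, one first derives that $\langle o, a\rangle$ is the perpendicular bisector of $bb'$, so $\langle o, a\rangle \perp \langle b, c\rangle$ at $a$ and $o \notin \langle b, c\rangle$; combined with $Z(abc)$, the points $b', a, b, c$ lie in that order on $\langle b, c\rangle$, and $a$ is the foot of the perpendicular from $o$ to this line.

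Assume for contradiction that $Z(ocd)$ holds. Then $d$ lies on the side of $\langle b, c\rangle$ opposite to $o$, and the triple $o, b, d$ is non-collinear (since $\langle o, d\rangle = \langle o, c\rangle$ meets $\langle b, c\rangle$ only at $c \neq b$), so $obd$ is a proper isosceles triangle with apex $o$. I then compare two angles at the base vertex $b$. On the one hand, the segment $od$ contains $c$ (by $Z(ocd)$) while $c$ lies on the ray $\stackrel{\longrightarrow}{bc}$ (by $Z(abc)$), so the crossbar theorem places $\stackrel{\longrightarrow}{bc}$ in the interior of $\angle obd$; since $\angle obd$ is acute by F\ref{(F7)}, the sub-angle $\angle obc$ is then acute by F\ref{(F4)}. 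On the other hand, applying A\ref{fusspunkt} to the right triangle $oab$ (right angle at $a$, since $\langle o, a\rangle \perp \langle a, b\rangle$) places the foot of the perpendicular from $a$ to the hypotenuse $ob$ strictly between $o$ and $b$, which is precisely the condition that $\angle oba$ be acute in the paper's definition of acuteness; since $Z(abc)$ makes $\stackrel{\longrightarrow}{ba}$ and $\stackrel{\longrightarrow}{bc}$ opposite rays at $b$, the supplementary angle $\angle obc$ cannot also be acute. The resulting contradiction refutes $Z(ocd)$, and the hypothesis $Z(odc) \vee Z(ocd)$ forces $Z(odc)$.

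The hardest step is formalizing, within the paper's ray-based definition of acuteness, that the supplement of an acute angle is not acute. The cleanest axiomatic route is to show directly that the foot $h_c$ of the perpendicular from $c$ to $\langle b, o\rangle$ lies on the ray from $b$ opposite to $\stackrel{\longrightarrow}{bo}$: the perpendiculars to $\langle b, o\rangle$ through $a$ and through $c$ are distinct and do not meet (both are perpendicular to $\langle b, o\rangle$), and each meets $\langle b, c\rangle$ only at $a$, respectively $c$. Since $Z(abc)$ places $a$ off the segment $bc$ and $c$ off the segment $ab$, a Pasch-type argument rules out both $Z(bh_ah_c)$ and $Z(bh_ch_a)$ on $\langle b, o\rangle$, leaving $Z(h_abh_c)$; combined with $Z(oh_ab)$ from A\ref{fusspunkt}, this puts the points in the order $o, h_a, b, h_c$ on $\langle b, o\rangle$, so $h_c$ is off $\stackrel{\longrightarrow}{bo}$ and $\angle obc$ is indeed not acute.
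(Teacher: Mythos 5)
Your proof is correct in substance, but it follows a genuinely different route from the paper's. The paper also argues by contradiction from $Z(ocd)$, but it works entirely with betweenness along the line $\langle o,c\rangle$: it intersects the perpendicular at $b$ to $\langle a,b\rangle$ with the segment $oc$ in a point $x$, uses ${\bf RR}$ (A\ref{fusspunkt}) to place the intersection $y$ of the perpendicular at $b$ to $\langle b,d\rangle$ on the segment $xd$, and then observes that the midpoint $m$ of $bd$ (which is $F(bdo)$ because $ob\equiv od$) satisfies $Z(dmb)$, so F\ref{(F6)} forces $Z(doy)$ --- incompatible with the order $o,x,y,d$ already established. Your argument instead exploits the isosceles triangle $obd$: F\ref{(F7)} and F\ref{(F4)} make $\angle obc$ acute, while A\ref{fusspunkt} applied to the right triangle $oab$ together with a projection argument shows that $\angle obc$, being supplementary to $\angle oba$, cannot be acute. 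Both proofs use A\ref{fusspunkt} and F\ref{(F6)}-type projection reasoning exactly once; yours is the more conceptual (it isolates the geometric reason the lemma holds, namely monotonicity of distance from $o$ along a ray through the foot of the perpendicular), while the paper's is shorter because it never leaves the language of betweenness on two lines. The one place where you lean on an unstated auxiliary fact is the bridge ``$\angle obc$ acute $\Rightarrow$ the foot of the perpendicular from $c$ to $\langle o,b\rangle$ lies on the ray $\stackrel{\longrightarrow}{bo}$,'' which is needed to turn your order $o, h_a, b, h_c$ into ``$\angle obc$ is not acute''; this is not literally among F\ref{(F1)}--F\ref{(F13)}, but it follows from F\ref{(F6)} by projecting a point of $\stackrel{\longrightarrow}{bc}$ interior to the right angle at $b$, and is of the same ``straightforward'' character as the facts the paper states without proof, so it is not a gap.
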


\begin{proof}
Suppose we have $Z(odc)$. Let $x$ be the intersection point of the
perpendicular in $b$ on $\langle b, a\rangle$ with side $oc$ of
$\Delta aoc$ (which it must intersect by the Pasch axiom and the
fact that it cannot intersect $\langle o, a\rangle$), and let $y$ be
the point of intersection of the perpendicular in $b$ on $\langle b,
d\rangle$ with segment $xd$ (by ${\bf RR}$, i.\ e.\ by
A\ref{fusspunkt}, there must be such an intersection point). Let
$m=F(bdo)$.  Since we have $Z(dmb)$, we should, by F\ref{(F6)}, also
have $Z(doy)$, which cannot be the case, since we have $Z(dyx)$ and
$Z(dxo)$, thus $Z(dyo)$. Hence we must have $Z(odc)$.
\end{proof}

\begin{lem} \label{l4}
If $\neg L(oac)$, ray $\stackrel{\longrightarrow}{ob}$ lies between
$\stackrel{\longrightarrow}{oa}$ and
$\stackrel{\longrightarrow}{oc}$, then one of the halflines
determined by $o$ on $\langle o, d\rangle$, the line for which
$\sigma_{oc}\sigma_{ob}\sigma_{oa}=\sigma_{od}$ (which exists, since
metric planes satisfy the three reflection theorem for concurrent
lines), also lies between $\stackrel{\longrightarrow}{oa}$ and
$\stackrel{\longrightarrow}{oc}$.
\end{lem}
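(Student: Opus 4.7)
The plan is to show that $\langle o,d\rangle$ arises by reflecting $\langle o,b\rangle$ across the interior bisector of $\angle aoc$, and then to use the fact that this reflection preserves $\angle aoc$ setwise, so that the image of the interior ray $\stackrel{\longrightarrow}{ob}$ is an interior ray of $\angle aoc$ lying on $\langle o,d\rangle$.

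First I use the segment-transport axiom A\ref{norV1} to replace $a$ and $c$ by points on their rays with $oa\equiv oc$. Since the lines $\langle o,a\rangle$ and $\langle o,c\rangle$ intersect at $o$ and $oa\equiv oc$, F\ref{(F11)} provides an interior angle bisector $\stackrel{\longrightarrow}{om}$ of $\angle aoc$. The reflection $\sigma_{om}$ fixes $o$ and interchanges the rays $\stackrel{\longrightarrow}{oa}$ and $\stackrel{\longrightarrow}{oc}$, hence maps $\angle aoc$ onto itself (swapping the sub-angles $\angle aom$ and $\angle moc$) and sends interior rays of $\angle aoc$ to interior rays of $\angle aoc$. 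Its defining property is the conjugation identity $\sigma_{om}\sigma_{oa}\sigma_{om}=\sigma_{oc}$, equivalently $\sigma_{oa}\sigma_{om}=\sigma_{om}\sigma_{oc}$.

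Setting $b^{*}:=\sigma_{om}(b)$, so that $\sigma_{om}\sigma_{ob}\sigma_{om}=\sigma_{ob^{*}}$, the core claim is $\langle o,b^{*}\rangle=\langle o,d\rangle$, i.e.\ $\sigma_{oc}\sigma_{ob}\sigma_{oa}=\sigma_{om}\sigma_{ob}\sigma_{om}$. Multiplying the two alleged equal expressions I compute
\[
\sigma_{oc}\sigma_{ob}\sigma_{oa}\cdot\sigma_{om}\sigma_{ob}\sigma_{om}
=\sigma_{oc}\sigma_{ob}(\sigma_{oa}\sigma_{om})\sigma_{ob}\sigma_{om}
=\sigma_{oc}\sigma_{ob}(\sigma_{om}\sigma_{oc})\sigma_{ob}\sigma_{om}
=(\sigma_{oc}\sigma_{ob}\sigma_{om})^{2};
\]
by the three reflection theorem for concurrent lines, $\sigma_{oc}\sigma_{ob}\sigma_{om}$ is itself a reflection, hence an involution, so this square is the identity. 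Since $\sigma_{om}\sigma_{ob}\sigma_{om}$ is also a reflection (and therefore involutive), this identifies $\sigma_{oc}\sigma_{ob}\sigma_{oa}$ with it.

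Finally, $\stackrel{\longrightarrow}{ob}$ lies in the interior of $\angle aoc$ by hypothesis, so its image $\stackrel{\longrightarrow}{ob^{*}}=\sigma_{om}(\stackrel{\longrightarrow}{ob})$ is also interior to $\angle aoc$; since $\stackrel{\longrightarrow}{ob^{*}}$ is one of the two halflines with endpoint $o$ on $\langle o,b^{*}\rangle=\langle o,d\rangle$, the conclusion follows. The main obstacle is the middle step, which hinges on recognising $\sigma_{oc}\sigma_{ob}\sigma_{om}$ as a reflection via the three reflection theorem at exactly the right point in the manipulation, combined with the angle-bisector conjugation $\sigma_{oa}\sigma_{om}=\sigma_{om}\sigma_{oc}$.
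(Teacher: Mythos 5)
Your algebraic identity is correct: granting an interior bisector $\langle o,m\rangle$ of $\angle aoc$, the computation $\sigma_{oc}\sigma_{ob}\sigma_{oa}\cdot\sigma_{om}\sigma_{ob}\sigma_{om}=(\sigma_{oc}\sigma_{ob}\sigma_{om})^{2}=1$ does show that $\langle o,d\rangle=\sigma_{om}(\langle o,b\rangle)$, and then the conclusion would follow since $\sigma_{om}$ maps the interior of $\angle aoc$ onto itself. The problem is the very first step. In the setting of this lemma --- Bachmann's (standard) ordered metric planes --- there is no free mobility: not every angle is bisectable, and $\angle aoc$ is an \emph{arbitrary} angle here. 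Your appeal to A\ref{norV1} to ``replace $a$ and $c$ by points on their rays with $oa\equiv oc$'' misreads that axiom: A\ref{norV1} has the congruence $ab\equiv a'b'$ as a \emph{hypothesis} and only extends an already given congruence along collinear points; it does not transport a segment from one line onto another. Likewise F\ref{(F11)} produces a bisector only when one is already handed congruent segments $ab\equiv a'b'$ on the two intersecting lines, and no axiom of the system guarantees the existence of a point on $\stackrel{\longrightarrow}{oc}$ at distance $oa$ from $o$. So the bisector $\langle o,m\rangle$ on which your whole argument rests need not exist, and the proof collapses at that point. (Note that in the one place the main theorem does use a bisector, the paper is careful to justify its existence by exhibiting congruent segments on the two legs first.)

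This is exactly why the paper's proof of Lemma \ref{l4} is so much longer: it cannot conjugate by the bisector, so it instead constructs points $x,y,z$ on the three rays with $Z(xyz)$ and $y=F(xzo)$, follows the image $z'=\sigma_{oc}\sigma_{ob}\sigma_{oa}(z)$ directly, and proves synthetically --- via the Pasch axiom, the crossbar theorem, and the standardness axiom A\ref{fusspunkt} --- that $z'$ must land in the interior of $\angle aoc$, whence so does the midpoint $d$ of $zz'$. If you want to salvage your idea, you would have to add the hypothesis that $\angle aoc$ admits an interior bisector, which would make the lemma too weak for its application; otherwise the order-theoretic route of the paper seems unavoidable.
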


\begin{proof}
Let $a'=\sigma_{oF(oba)}(a)$. If $a'$ lies on the side determined by
$\langle o, c\rangle$ opposite to the one in which $a$ lies, then
segmenmt $aa'$ must intersect $\stackrel{\longrightarrow}{oc}$ in a
point $z$ and thus we have, with $x=a$ and $y=F(oba)$, that $x$,
$y$, and $z$ are three points on the rays
$\stackrel{\longrightarrow}{oa}$, $\stackrel{\longrightarrow}{ob}$,
and $\stackrel{\longrightarrow}{oc}$ respectively, such that
$Z(xyz)$ and $y=F(xzo)$. Three such points can be found even in case
$a'$ lies on the same side determined by $\langle o, c\rangle$ as
$a$. For, in that case, it must be that $c'=\sigma_{ob}(c)$  is on
the  side determined by $\langle o, a\rangle$ opposite to the one in
which $c$ lies. To see this, notice that, by the crossbar theorem,
$\stackrel{\longrightarrow}{oa'}$ must intersect the segment
$cF(obc)$ in a point $p$, so we have $Z(F(obc)pc)$. Since
$\sigma_{ob}$ preserves betweenness and $p'=\sigma_{ob}(p)$ is on
$\stackrel{\longrightarrow}{oa}$, we have $Z(F(obc)p'c')$, i.\ e.\
segment $cc'$ intersects $\langle o, a\rangle$. So, in this case, we
set $x=p'$, $y=F(obc)$, $z=c$, to have three points on the rays
$\stackrel{\longrightarrow}{oa}$, $\stackrel{\longrightarrow}{ob}$,
and $\stackrel{\longrightarrow}{oc}$ respectively, such that
$Z(xyz)$ and $y=F(xzo)$.

Let now $z' = \sigma_{oc}\sigma_{ob}\sigma_{oa}(z)=
\sigma_{oc}\sigma_{ob}(z)$. Note that, with $d=F(zz'o)$, we have
$\sigma_{oc}\sigma_{ob}\sigma_{oa}= \sigma_{od}$, so if we prove
that $z'$ lies inside the angle $\angle aoc$, we are done, since
$d$, as the midpoint of $zz'$, must lie inside the angle $\angle
aoc$ as well.

With  $u=\sigma_{ob}(z)$, we notice that we must have one of
$Z(yxu)$ or $Z(yzu)$ (for, if $Z(yux)$, then, by the fact that
reflections in lines preserve  betweenness, we must have  $Z(yzu)$),
so, given that $\sigma_{oc}\sigma_{ob}\sigma_{oa}=
\sigma_{oa}\sigma_{ob}\sigma_{oc}$, we may assume, w.\ l.\ o.\ g.\
that that $Z(yxu)$. With $v=F(oau)$ and $w=F(oaF(obz)$, we must
have, by F\ref{(F6)}, $Z(wxv)$. By A\ref{fusspunkt} we also have
$Z(owx)$, thus also $Z(oxv)$. Since the line $\langle u, v\rangle$
intersects the extensions of two sides of $\Delta oxz$, it cannot,
by the Pasch axiom, intersect the segment $oz$, so if the segment
$uz'$ intersects line $\langle o, z\rangle$, then it can intersect
it only in a point $q$ with $Z(ozq)$ (and $Z(uqz')$). In that case,
by the Pasch axiom, the secant $\langle o, d\rangle$ must intersect
the side $qz'$ of $\Delta zqz'$ in a point $r$. The perpendicular in
$r$ on $\langle q, z'\rangle$ must intersect, by the Pasch axiom,
one of the sides $oq$ or $oz'$ of $\Delta oqz'$. It cannot intersect
$oq$, for then, it would also have to intersect, by the Pasch axiom,
side $vq$ of $\Delta oqv$, and from that intersection point there
would be two perpendiculars to $\langle q, z'\rangle$. So it must
intersect segment $oz'$ in $s$. By the Pasch axiom applied to
$\Delta doz'$ and secant $\langle r, s\rangle$, we conclude that
there is a point $f$ with $Z(rfs)$ and $Z(dfz')$. By
A\ref{fusspunkt} we have $Z(rF(rfd)f)$, and the Pasch axiom applied
to $\Delta rz'f$ with secant $\langle d, F(rfd)\rangle$ gives a
point of intersection of the latter with segment $z'r$, a
contradiction, as from that point one has dropped two distinct
perpendiculars to $\langle r, s\rangle$. Thus, $z'$ has to lie
inside the angle $\angle aoc$, and we are done.
\end{proof}


\begin{theorem}
The generalized Steiner-Lehmus theorem, (\ref{sl}), holds in
Bachmann's standard ordered metric planes.
\end{theorem}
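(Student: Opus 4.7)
The plan is to argue by contradiction along the lines of Descube. From $ab\equiv ad$ and $sb\equiv sd$, both $a$ and $s$ lie on the perpendicular bisector of segment $bd$; since $s\ne a$ (the point $s$ is strictly between $b$ and $m$ while $b\notin\langle a,c\rangle$), the line $\ell:=\langle a,s\rangle$ equals that perpendicular bisector, $\sigma_\ell$ interchanges $b$ with $d$, and $\ell$ is the internal bisector of $\angle bac$, swapping the rays $\overrightarrow{ab}$ and $\overrightarrow{ac}$. If $d=c$ the theorem is immediate. Exploiting the symmetry of the hypotheses under $(b,m)\leftrightarrow(c,n)$ (with the counterpart of $d$ being $\sigma_\ell(c)\in\overrightarrow{ab}$, at distance $ac$ from $a$), I would assume without loss of generality that $Z(adc)$, which together with $M(boc)$ formalizes $ab<ac$ in the sense of our \emph{Notation}.

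I then introduce the reflected data. Set $c^*:=\sigma_\ell(c)\in\overrightarrow{ab}$, which gives $Z(abc^*)$ since $ac>ab$, and $n^*:=\sigma_\ell(n)\in\overrightarrow{ac}$, which gives $Z(an^*d)$ and hence $Z(an^*c)$ since $an<ab=ad$. Because $\sigma_\ell$ fixes $s$, the image segment $c^*n^*=\sigma_\ell(cn)$ passes through $s$ with $Z(c^*sn^*)$ and satisfies $c^*n^*\equiv cn\equiv bm$. With $m^*:=\sigma_\ell(m)$, the image $dm^*=\sigma_\ell(bm)$ likewise passes through $s$ with $Z(dsm^*)$ and $dm^*\equiv bm$. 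This yields two pairs of congruent segments through $s$ (original and reflected), which is the configuration on which the Descube-style comparison feeds.

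Next I would apply Lemma~\ref{l1} to the triangle $\triangle bsc$, whose non-degeneracy follows from $\neg L(abc)$ and the fact that $s$ lies strictly inside $\triangle abc$. Taking the given $n$ (which satisfies $Z(csn)$) and transporting the length $cn$ along a suitable ray from $c$ that cuts segment $bs$ at a point $x$, I obtain a point $m'$ with $Z(cxm')$ and $cm'\equiv cn$. The segment transport is available through Fact~\ref{(F10)} thanks to the presence of the angle bisector $\ell$, with Lemma~\ref{l4} placing the auxiliary ray inside the required angle and the ${\bf RR}$-axiom A\ref{fusspunkt} together with Lemma~\ref{l3} controlling the betweenness relations on the relevant orthogonal projections. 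Lemma~\ref{l1} then yields $bm'<bn$. A comparison of $bm$ with $bm'$ via the position of $m$ and $m'$ on line $ac$ relative to the perpendicular bisector of $bc$---whose intersection with segment $ac$ (rather than $ab$) is guaranteed by Lemma~\ref{l2} under the assumption $ab<ac$---would give $bm\le bm'$, and a corresponding comparison of $cn$ with $bn$ via the reflected segment $c^*n^*$ (which shares $s$ with $cn$, has the same length, and connects a point beyond $b$ on $\overrightarrow{ab}$ to a point strictly between $a$ and $d$) would close the chain of inequalities to $bm<cn$, contradicting $bm\equiv cn$.

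The hard part will be orchestrating the betweenness relations needed to invoke Lemma~\ref{l1} in the weak standard ordered metric setting. Because free mobility is unavailable, the existence of $x$ on segment $bs$ and of $m'$ in the correct position relative to $n$ does not come for free as it would in Hilbert's absolute geometry. The three standing tools---axiom A\ref{fusspunkt} (which forbids one right angle from being enclosed within another), Lemma~\ref{l4} (which asserts that the axis of a composed reflection lies inside a given angle), and Fact~\ref{(F6)} (preservation of betweenness under orthogonal projection)---substitute for free mobility, and arranging their interplay so that every auxiliary point falls in the correct interval is the principal technical challenge of the proof.
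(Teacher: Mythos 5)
Your outline correctly identifies the Descube-style strategy, the role of the angle bisector $\langle a,s\rangle$ as the perpendicular bisector of $bd$, and the relevance of Lemmas~\ref{l1}--\ref{l4} and of A\ref{fusspunkt}. But as a proof it has a genuine gap: the entire conclusion is deferred to two steps you only describe in the conditional (``would give $bm\le bm'$'', ``would close the chain of inequalities to $bm<cn$''), and that chain cannot be closed as stated. The relation $<$ introduced in the paper's \emph{Notation} compares only two segments sharing an endpoint (via the perpendicular bisector of the segment joining the two other endpoints, which must possess a midpoint); ``$bm<cn$'' is not even well-formed in this sense, and no transitivity of $<$, nor any compatibility of $<$ with $\equiv$ (i.e.\ that $xy<xz$ excludes $xy\equiv xz$ for the relevant segments), is available in a standard ordered metric plane without further argument. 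This is precisely why the paper's proof does \emph{not} end with an inequality chain: after obtaining $bm'<bn$ (Lemma~\ref{l1}) and $mc<mb'$ (Lemma~\ref{l2} applied to transported segments $m_1$, $m_2=\varrho_p(m_1)$), it builds the auxiliary points $h$, $a'=\varrho_p(a)$, $a_1=\sigma_{nh}(a')$, $m''=\sigma_{as}(m)$, $b_1=\sigma_{as}(b)$, passes to the congruent triangles $ca_1n$ and $b_1am''$, rotates by $\varphi=\sigma_{ca}\sigma_{cc_1}$, and uses Lemma~\ref{l4}, F\ref{(F4)}, F\ref{(F7)}, F\ref{(F12)} to produce two distinct points $p_1, p_2$ on a ray through $c$ equidistant from the line $\langle b_1,a\rangle$, contradicting F\ref{(F13)}. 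None of this machinery appears in your plan, and it is not optional decoration: it is the substitute for the inequality calculus you are implicitly assuming.

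Two further omissions matter. First, the paper must establish at the outset that $Z(aF(abs)b)$ and $Z(aF(acs)c)$, i.e.\ that the feet of the perpendiculars from $s$ to the two sides fall inside those sides; this is a nontrivial argument (Pasch, A\ref{fusspunkt}, Lemma~\ref{l3}, F\ref{(F8)}, F\ref{(F11)}) and is needed before F\ref{(F1)} and F\ref{(F6)} can be applied to points of $\langle a,s\rangle$. Second, the paper's case split is on whether $o=F(bcs)$ (the isosceles case being handled by showing $\sigma_{so}$ fixes $a$), and in the other case the point $m'$ is not an arbitrary transport of $cn$ but specifically $\sigma_{ox}(m)$, where $x$ is the intersection of the perpendicular bisector of $bc$ with $sb$; this specific choice is what guarantees the hypotheses $Z(bxs)$, $Z(cxm')$, $cm'\equiv cn$ of Lemma~\ref{l1}. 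Your ``suitable ray from $c$ that cuts segment $bs$'' leaves exactly the existence and positioning questions open that the reflection $\sigma_{ox}$ settles.
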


\begin{proof}

Let $b'=\varrho_p(b)$. Then, since $\varrho_p$ is an isometry,
$nb\equiv mb'$ and $bm\equiv b'n$. Since $bm\equiv cn$, we have
$nb'\equiv nc$ as well.

We will first show that $Z(aF(abs)b$ and $Z(aF(acs)c)$ must hold.
The perpendicular raised in $s$ on $\langle a, s\rangle$ must
intersect, by the Pasch axiom, one of the sides $ac$ or $an$ of
$\Delta acn$. Thus, it must intersect at least one of the sides $ab$
and $ac$ of $\Delta abc$. If it intersects both sides (including the
ends $b$ and $c$ of the segments $ab$ and $ac$), then, by
A\ref{fusspunkt}, we get the desired conclusion, namely that
$Z(aF(abs)b$ and $Z(aF(acs)c)$. Suppose the perpendicular raised in
$s$ on $\langle a, s\rangle$ intersects one of the two, say $ac$,
but {\em not}  the closed segment $ab$. Since the point $q$,
obtained by reflecting in $\langle a, s\rangle$ the intersection of
the perpendicular raised in $s$ on $\langle a, s\rangle$ with $ac$,
lies on both $\langle a, s\rangle$ and on $\langle a, b\rangle$, we
must have $Z(abq)$ (since we assumed that we do not have $Z(aqb)$)
and $\langle s, a\rangle \perp \langle s, q\rangle$. We want to show
that we still need to have $Z(aF(abs)b$ in this case as well.
Suppose that were not the case, and we'd have $Z(F(abs)ba)$. We thus
have $Z(F(abs)bn)$ and $Z(cmF(acs))$. With $s'=\varrho_p(s)$, we
have, given that point-reflections are isometries, $s'n\equiv sm$,
$s'b'\equiv sb$, and $bm\equiv b'n$. Simce $bm\equiv cn$, we also
have $cn\equiv b'n$. With $u=F(b'cn)$, $w=\varrho_{nu}(s)$, we
notice that $Z(nwb')$, $ns\equiv nw$, $sc\equiv wb'$  (since lines
$\langle n, c\rangle$ and $\langle n, b'\rangle$ are symmetric with
respect to $\langle n, u\rangle$, and symmetry in lines preserves
both congruence and betweenness). Let $v$ be the point (whose
existence is ensured by A\ref{norV1}) for which  (by F\ref{(F8)})
$Z(nvb')$, $ns'\equiv b'v$, and $b's'\equiv nv$. We thus have
$bs\equiv nv$, $ns\equiv nw$, $sm\equiv b'v$,  $sc\equiv wb'$. By
Lemma \ref{l3} (with $(s, F(abs), b, n)$ and $(s, F(acs), m,c)$ for
$(o,a,b,c)$), using F\ref{(F11)} (i.\ e.\ bearing in mind that $sb$
can be transported from $n$ on $\stackrel{\longrightarrow}{ns}$, and
that $sm$ can be transported from $c$ on
$\stackrel{\longrightarrow}{cs}$, and that the second points
resulting from the trasnport are on the open segments $ns$ and
$cs$), we deduce that $Z(nvw)$ and $Z(b'vw)$, which is impossible.
This proves that both $Z(aF(abs)b)$ and $Z(aF(acs)c)$ must hold.

If  $o$ coincides with $F(bcs)$, then $sb\equiv sc$ and thus, since
$bm\equiv cn$, also $sm\equiv sn$ (by F\ref{(F5)}). Since
$\sigma_{so}(b)=c$, and $\sigma_{so}$ is an isometry and preserves
betweenness, we must, by the uniqueness requirement in A\ref{norV1},
have $\sigma_{so}(m)=n$, and thus $\sigma_{so}$ maps line $\langle
b, n\rangle$ onto line $\langle m, c\rangle$. Since these two lines
intersect in $a$, point $a$ must lie on the axis of reflection, and
thus $ab\equiv ac$.\\

Suppose $o\neq F(bcs)$. W. l.\ o.\ g.\ we may assume that
$Z(boF(bcs))$. By the Pasch axiom, the perpendicular bisector of the
segment $bc$ must intersect one of the sides $sb$ and $sc$ of
$\triangle sbc$. Given $Z(boF(bcs))$ and F\ref{(F6)},  it must
intersect side $sb$ in a point $x$ (and thus does not intersect the
side $sc$). By the Pasch axiom, line $\langle x, o\rangle$ must
intersect one of the sides $ab$ and $ac$ of $\triangle abc$. Line
$\langle x, o\rangle$ cannot intersect $ac$, for else, by the Pasch
axiom applied to $\triangle asc$ and secant $\langle x, o\rangle$,
it would have to intersect one of the sides $sa$ and $sc$. Since we
have already seen that $\langle x, o\rangle$ cannot intersect
segment $sc$,  $\langle x, o\rangle$ must intersect the segment $sa$
in a point $z$. We will show that this leads to a contradiction. Let
$z_1=F(abz)$ and $z_2=F(acz)$. We have shown that $Z(aF(abs)b)$ and
$Z(aF(acs)c)$, so, given $Z(sza)$, we can apply F\ref{(F6)},  to
obtain then have $Z(az_1b)$, $Z(az_2c)$. We also have $az_1\equiv
az_2$ (by F\ref{(F1)}) and $zb\equiv zc$ (as $z$ is a point on the
perpendicular bisector of segment $bc$). Since $\sigma_{az}$ maps
line $\langle a, b\rangle$ onto line $\langle a, c\rangle$, we have
that $zb\equiv z\sigma_{az}(b)$, and thus $zc\equiv
z\sigma_{az}(b)$, and $L(ac\sigma_{az}(b))$. Since
$\sigma_{az}(b)\neq c$ (else, we'd have $ab\equiv ac$, so
$o=F(bcs)$), we must have $\sigma_{az}(b)=\varrho_{z_2}(c)$, a
contradiction, as $\sigma_{az}$ preserves the betweenness relation,
and we have $Z(az_1b)$, and thus should have
$Z(az_2\sigma_{az}(b))$. Thus $\langle x, o\rangle$ must intersect
$ab$ in point $g$.

Let $m'=\sigma_{ox}(m)$. Since $c=\sigma_{ox}(b)$ and $\sigma_{ox}$
is an isometry, we have $cm\equiv bm'$, as well as $bm\equiv cm'$,
and thus, given the hypothesis that $bm\equiv cn$, we have
$cm'\equiv cn$, and since $x$ is a fixed point of $\sigma_{ox}$ and
$Z(bxm)$, we have $Z(cxm')$, and thus the hypothesis of  Lemma
\ref{l1} holds, and thus so must the conclusion, i.~ e.\  $bm'<bn$.
By F\ref{(F10)}, $cm$ can be transported from $n$ on the ray
$\stackrel{\longrightarrow}{nb}$ to get $m_1$, which, by Lemma
\ref{l2} (which can be applied, as $m'n$ does have a midpoint, given
that the other two sides of $\triangle mnm'$ have midpoints,  see
\cite[\S4,2, Satz 2]{bach}), must be such that $Z(bm_1n)$ and
$nm_1\equiv cm$. Since reflections in points are isometries and
preserve betweenness, for $m_2=\varrho_{p}(m_1)$ we have $mm_2\equiv
nm_1$ (thus $mm_2\equiv cm$) and $Z(mm_2b')$, so, by Lemma \ref{l2}
(which can be applied as the segment $cb'$ does have a midpoint, as
the two other sides of $\triangle bcb'$ have midpoints,  see
\cite[\S4,2, Satz 2]{bach}),
we have $mc< mb'$. \\

Let $h$ be the intersection point of the perpendicular bisector of
$b'c$ with segment $mb'$. Since $nc\equiv nb'$, we have
$L(nhF(b'cn))$. Let $a'=\varrho_p(a)$ and $a_1=\sigma_{nh}(a')$. We
have $na_1\equiv na'$ and $na'\equiv am$ (since symmetries in both
lines and points are isometries),  thus $na_1\equiv ma$. Let
$m''=\sigma_{as}(m)$ and $b_1=\sigma_{as}(b)$. Given $ac<ab$, we
must have $Z(acb_1)$ (by Lemma \ref{l2}), and thus, by the Pasch
axiom applied to $\triangle anc$ and secant $\langle b_1, s\rangle$,
the latter must intersect side $na$, and the point of intersection
is $m''$, so $Z(am''n)$. Since $ma\equiv m''a$, we also have
$na_1\equiv m''a$. We also have $b'a'\equiv ba$, $b'a'\equiv ca_1$
(since symmetries in both points  and  lines are isometries), so
$ba\equiv ca_1$, and, since $ba\equiv b_1a$, also $ca_1\equiv b_1a$.

We turn our attention to the congruent triangles $ca_1n$ and
$b_1am''$. The $\angle aca_1$ being bisectable (by F\ref{(F11)}),
let $\stackrel{\longrightarrow}{cc_1}$ be its internal bisector
(i.~e.\ $\stackrel{\longrightarrow}{cc_1}$  lies between
$\stackrel{\longrightarrow}{ca}$  and
$\stackrel{\longrightarrow}{ca_1}$) let
$\varphi=\sigma_{ca}\sigma_{cc_1}$. By F\ref{(F4)}, $\angle
\varphi(a_1)c\varphi(n)$ is acute (as triangles $ca_1n$ and
$c\varphi(a_1)\varphi(n)$ are congruent and $\angle a_1cn$ is acute
(by F\ref{(F7)}, as it is included in the base angle of the
isosceles triangle $ncb'$)). By Lemma \ref{l4}, and the fact that
$\angle \varphi(a_1)c\varphi(n)$ is acute,
$\stackrel{\longrightarrow}{c\varphi(n)}$ is between
$\stackrel{\longrightarrow}{cn}$ and
$\stackrel{\longrightarrow}{cb_1}$. Thus
$\stackrel{\longrightarrow}{c\varphi(n)}$ must intersect segment
$b_1s$ in a point $p_1$. Let $p_2$ be a point on
$\stackrel{\longrightarrow}{cp_1}$ with $cp_2\equiv b_1p_1$   (such
a point exists by F\ref{(F11)}). Notice that $p_1\neq p_2$, given
that one of the base angles of  $\triangle p_1b_1c$, $\angle
b_1cp_1$ is not acute (since its supplement,  $\angle p_1ca$ is
acute), so $\triangle p_1b_1c$ cannot be isosceles by F\ref{(F7)}.
On ray $\stackrel{\longrightarrow}{cp_1}$ there are thus two points,
$p_1$ and $p_2$, whose distance to line $\langle b_1, a\rangle$ is
the same (by F\ref{(F12)}), contradicting F\ref{(F13)}.

\end{proof}

\section{A triangle with two congruent medians is
isosceles}\label{med}

We will turn to the proof of the second result proved in \cite{bli2}
to be true in Hilbert's absolute planes, i.~e.

\begin{theorem} \label{th2}
A triangle with two congruent medians is isosceles.
\end{theorem}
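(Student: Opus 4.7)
The plan is to adapt the classical centroid-based proof. Let $M_a$ be the midpoint of $BC$ and $M_b$ the midpoint of $CA$, so the hypothesis reads $AM_a \equiv BM_b$. The first step is to show that in a Hjelmslev plane without double incidences of characteristic $\neq 3$ the two medians $AM_a$ and $BM_b$ meet at a single point $G$ which trisects each of them, in the sense that there exist points $H_a$ on $AM_a$ and $H_b$ on $BM_b$ with $AH_a \equiv H_aG \equiv GM_a$ and $BH_b \equiv H_bG \equiv GM_b$. This is the centroid theorem, and it is precisely here that the characteristic hypothesis enters: algebraically the centroid is $\tfrac{1}{3}(A+B+C)$, whose construction requires $3$ to be invertible in the coordinate ring underlying the Hjelmslev plane.

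Given the centroid with its trisection property, a short congruence argument finishes the proof. From $AM_a \equiv BM_b$ together with the trisection, one extracts $AG \equiv BG$ and $GM_a \equiv GM_b$ by subtracting equal ``thirds'' from equal wholes, in the spirit of F\ref{(F5)}. The angles $\angle AGM_b$ and $\angle BGM_a$ are a pair of vertical angles at $G$, since $A, G, M_a$ are collinear on one median and $B, G, M_b$ on the other, so they are congruent. The SAS congruence criterion available in Hjelmslev planes without double incidences then yields $\triangle AGM_b \equiv \triangle BGM_a$, hence $AM_b \equiv BM_a$. Because $M_b$ is the midpoint of $CA$ and $M_a$ is the midpoint of $BC$, doubling gives $CA \equiv CB$, which is the isosceles conclusion.

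The main obstacle is the first step: locating the centroid with its trisection property inside the group-theoretic framework of Hjelmslev planes. I would either invoke it from Bachmann's treatment of Hjelmslev groups, where it follows routinely once $3$ is invertible, or reprove it from the three-reflections theorem for concurrent lines applied to the product $\varrho_{M_a}\varrho_{M_b}\varrho_{M_c}$ of the three side-midpoint reflections. Without the characteristic assumption the result genuinely fails: the median-length identity $4m_a^2 = 2b^2+2c^2-a^2$ (or its Hjelmslev-plane analogue) reduces $m_a \equiv m_b$ to $3(b^2-a^2) \equiv 0$, so in characteristic $3$ one can have two congruent medians without $a \equiv b$. Once the centroid step is secured, the remainder is essentially a vertical-angles-plus-SAS exercise that transfers verbatim from the Euclidean model.
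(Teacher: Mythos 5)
Your strategy fails at its very first step, and the failure is not a technicality. The trisection property of the centroid --- that the medians meet in a point $G$ dividing each in ratio $2:1$ --- is an \emph{affine/Euclidean} fact, not an absolute one. Theorem \ref{medians} is asserted for all non-elliptic Hjelmslev groups of characteristic $\neq 3$ without double incidences, a class that contains the real hyperbolic plane; there the medians are concurrent but the centroid does \emph{not} trisect them, so no appeal to ``$\tfrac13(A+B+C)$'' or to a coordinate ring in which $3$ is invertible can be made. (Your reading of the characteristic hypothesis is correspondingly off: in the paper it is used only to exclude $(UW)^3=1$ for the two midpoints $U,W$, i.e.\ to distinguish $W$ from its reflection $W^h$ on the line $s$ joining the midpoints --- not to invert $3$.) Even granting concurrence of the medians, which itself is not free in this setting (two lines need not meet, and points need not have a joining line), the trisection points $H_a,H_b$ you posit need not exist, just as midpoints need not exist in general. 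The later steps inherit further problems: the ``subtraction of thirds'' invokes F\ref{(F5)}, which is a statement about \emph{ordered} metric planes using the betweenness relation $Z$, whereas Section \ref{med} is deliberately order-free; the vertical-angle-plus-SAS step presupposes a free-mobility style congruence transport that the group-theoretic axioms H\ref{h1}--H\ref{h5} do not provide; and the final ``doubling'' from $AM_b\equiv BM_a$ to $CA\equiv CB$ must also handle the case where the witnessing motion swaps the endpoints the other way, since congruence here is defined up to such a swap.

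The paper's actual proof avoids all of this. It takes the midline $v$ of $A$ and $C$ (which exists because $A^{WU}=C$), shows that the join $s$ of the two midpoints $U,W$ is perpendicular to $v$, and observes that $U^v$ is a point of $s$ with $CU^v\equiv AU\equiv CW$. The key step is Lemma \ref{lmed}: on a line $s$ not through $C$ there are at most two points at a given congruence-distance from $C$, namely $W$ and $W^h$ where $h$ is the perpendicular from $C$ to $s$. The alternative $U^v=W^h$ is killed by characteristic $\neq 3$ via $(UW)^3=1$, leaving $U^v=W$, whence $B^v=B$ and $A^v=C$. If you want to salvage your outline you would have to replace the centroid step by an argument of this reflection-theoretic kind; as written, the proposal proves the theorem only in the Euclidean (affine) special case.
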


We will show that this theorem is true in a purely metric setting
(without introducing a relation of order). The axiom system for this
theory can be expressed in first order logic, as done in
\cite{stru}. Here we will present it in its group-theoretical
formulation of F. Bachmann \cite[p.\ 20]{bachje}.

\emph{Basic assumption.} Let $G$ be a group which is generated by an
invariant set $S$ of involutory elements.

\emph{Notation:} The elements of $G$ will be denoted by lowercase
Greek letters, its identity by $1$, those of $S$ will be denoted by
lowercase Latin letters. The set of involutory elements of $S^{2}$
will be denoted by $P$ and their elements by uppercase letters $A,
B, ...$. The \textquoteleft stroke relation' $\alpha \mid \beta$ is
an abbreviation for the statement that $\alpha, \beta$ and
$\alpha\beta$ are involutory elements. The statement $\alpha, \beta
\mid \delta$ is an abbreviation of $\alpha \mid \delta$ and $\beta
\mid \delta$. We denote $\alpha^{-1} \sigma \alpha$ by
$\sigma^{\alpha}$.

$(G, S, P)$ is called a \textit{Hjelmslev group without double
incidences} if it satisfies
the following axioms:\\

\begin{hax}  \label{h1}
For $A, b$ there exists c with $A, b \mid c$.
\end{hax}

\begin{hax}  \label{h2}
 If $A, B \mid c, d$ then $A = B$ or $c = d$.
\end{hax}

\begin{hax}  \label{h3}
 If $a, b, c \mid e$ then $abc \in S$.
 \end{hax}

\begin{hax}  \label{h4}
 If $a, b, c \mid E$ then $abc \in S$.
  \end{hax}

\begin{hax}  \label{h5}
 There exist $a, b$ with $a \mid b$.
   \end{hax}

The elements of $S$ can be thought of as reflections in lines (and
can be thought of as lines), those of $P$ as reflections in points
(and can be thought of as points), thus $a|b$ can be read as ``the
lines $a$ and $b$ are orthogonal", $A|b$ as ``$A$ is incident with
$b$". Thus, the axioms state that: through any point to any line
there is a perpendicular, two points have at most one joining line,
the three reflection theorem for three lines incident with a point
or having a common perpendicular, stating that the composition of
three reflections in lines which are either incident with a point or
have a common perpendicular is a reflection in a line, and the
existence of a point. In contrast to Bachmann's metric planes
\cite{bach} there may be points which have no joining line.

A notion of congruence for segments can be introduced in the
following way:

\begin{Def}
$AB$ and $CD$ are called \textit{congruent} ($AB \equiv CD$) if
there is a motion $\alpha$ (i.\ e.\ $alpha\in G$) with $A^{\alpha} =
C$ and $B^{\alpha} = D$ or with $A^{\alpha} = D$ and $B^{\alpha} =
C$.
\end{Def}

Our theorem on triangles with congruent medians can be stated in
this setting as:

\begin{enumerate}
\item[(*)] \emph{Let $A, B, C$ be three non-collinear points and $C^{U} = B$ and $B^{W} = A$ and $b \,|\, A, C$ and $n \,|\,
C, W$. If $AU \equiv CW$ then there exists a line $v$ through $B$
with $A^{v} = C$, i.e.  triangle $ABC$ is isosceles.}
\end{enumerate}

This statement does not hold in Bachmann's metric planes which are
elliptic or of characteristic 3 (such as the Euclidean plane over
$GF(3)$; see  \cite[7.4]{bachje}). A Hjelmslev group $(G, S, P)$
(and with it a Bachmann plane) is called \emph{non-elliptic} if $S
\cap P = \emptyset$, and  of \emph{characteristic} $\neq 3$ if
$(AB)^{3} \neq 1$ for all $A\neq B$,

\begin{theorem} \label{medians}
Let $(G, S, P)$ be a Hjelmslev group without double incidences which
is non-elliptic and of characteristic $\neq 3$. Let $A, B, C$ be
three non-collinear points and $C^{U} = B$ and $B^{W} = A$ and $b
\,|\, A, C$ and $n \,|\, C, W$. If $AU \equiv CW$ then there exists
a line $v$ through $B$ with $A^{v} = C$.
\end{theorem}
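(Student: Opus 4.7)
The plan is to mirror the classical Euclidean argument --- which applies Apollonius's median identity to both medians, subtracts, and produces a relation in which a factor of $3$ must be cancelled --- inside the purely group-theoretic Hjelmslev language. The geometric shadow of that cancellation is the $2:1$ trisection of the medians at the centroid, and the characteristic hypothesis is exactly what secures the trisection.

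First I would exhibit the midpoint $V$ of $AC$ as the element $V := WBU \in P$. The given data encodes $A = WBW$ (from $B^{W}=A$) and $C = UBU$ (from $C^{U}=B$), and a short calculation using only the involutory character of $U$, $W$, $B$ collapses $(WBU)^{-1}A(WBU) = UBW\cdot WBW\cdot WBU$ to $UW\cdot WBU = UBU = C$, so $A^{V}=C$. In a non-elliptic Hjelmslev group a product of three point reflections is again in $P$. Next, $V\,|\,b$: the conjugate $VbV$ lies in $S$ and is incident with $VAV = C$ and $VCV = A$, so by axiom H\ref{h2} it coincides with $b$; hence $m := Vb \in S$ is the perpendicular to $b$ at $V$, i.e., the perpendicular bisector of $AC$. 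The theorem reduces to establishing $B\,|\,m$ and taking $v := m$.

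To prove $B\,|\,m$ I would introduce the centroid $G$ as the common point of the three medians $AU$, $BV$, and $CW = n$. Under the present hypotheses these medians concur and $G$ divides each in the ratio $2:1$ from the vertex. The assumption $AU \equiv CW$ then yields $AG \equiv CG$, so $G$ is equidistant from $A$ and $C$ and hence $G\,|\,m$. Since $V\,|\,m$ too, and both $G$ and $V$ lie on the median from $B$, with $G \neq V$ (because $A, B, C$ are non-collinear, forcing $G$ strictly between $B$ and $V$), axiom H\ref{h2} forces $m$ to coincide with the joining line of $G$ and $V$, which passes through $B$.

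The main obstacle is the rigorous construction of the centroid and its $2:1$ trisection property inside the Hjelmslev setting, together with the verification that the median from $B$ exists as an honest joining line of $B$ and $V$. This is precisely where characteristic $\neq 3$ is indispensable: in characteristic $3$ the trisection degenerates, the centroid becomes ill-defined, and the statement itself fails in the Bachmann plane over $GF(3)$ mentioned just before the theorem. The non-elliptic hypothesis plays a parallel role, ensuring that $V = WBU$ is a point reflection rather than a line reflection and that the various perpendiculars we invoke behave in the classical way.
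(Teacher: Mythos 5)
Your target is the right one --- the desired $v$ is the perpendicular $Vb$ to $b$ at a midpoint $V$ of $A$ and $C$, and everything reduces to showing $B\,|\,v$ --- but both mechanisms you propose for getting there fail outside the Euclidean case, which is precisely the generality the theorem claims. First, $WBU$ need not lie in $P$: the inclusion $P\cdot P\cdot P\subseteq P$ holds for collinear centers and in metric-Euclidean planes, but in a non-Euclidean metric plane the product of three point reflections with non-collinear centers is in general not an involution, so it is not a point reflection; non-ellipticity ($S\cap P=\emptyset$) does nothing to repair this. The midpoint $V$ does exist, but one must obtain it as the paper does, from the relation $A^{WU}=C$ with $WU\in P\cdot P$ via Bachmann's Proposition 2.33, not from the formula $WBU$. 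Second, and fatally, the centroid step has no support in this setting: in a general non-elliptic metric or Hjelmslev plane the three medians are at best guaranteed to lie in a pencil, which need not have a carrier point, and in a Hjelmslev group without double incidences the points $B$ and $V$ need not even have a joining line. Even where a centroid $G$ exists (e.g.\ in the real hyperbolic plane) it does not divide the medians in ratio $2:1$, and there is no segment arithmetic here with which to pass from $AU\equiv CW$ to $AG\equiv CG$. So the claim that characteristic $\neq 3$ ``secures the trisection'' is a Euclidean heuristic that cannot be turned into an argument in this theory.

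The actual role of characteristic $\neq 3$ is quite different, and seeing it shows how to replace your centroid step. With $v=Vb$ and $s$ the joining line of $U$ and $W$, the midline theorem gives $v\perp s$, so the reflection in $v$ carries $U$ to a point $U^{v}$ on $s$ satisfying $CU^{v}\equiv A^{v}U^{v}\equiv AU\equiv CW$. A separate lemma shows that on $s$ there are at most two points $X$ with $CX\equiv CW$, namely $W$ and $W^{h}$, where $h=vWU$ is the perpendicular from $C$ to $s$. The alternative $U^{v}=W^{h}$ collapses to $(UW)^{3}=1$, which is exactly what characteristic $\neq 3$ forbids; the remaining alternative $U^{v}=W$ yields $B^{v}=U^{v}C^{v}U^{v}=WAW=B$ by direct computation, so $v$ passes through $B$. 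This route never mentions a centroid, a ratio, or the line $BV$, and that is why it survives in the stated generality.
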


For the proof of Theorem \ref{medians} we need the following:

\begin{lem} \label{lmed}
Let $C, W$ be points and $s, n$ lines with $C, W| n$ and $W| s$ and
$C\nmid s$. Then there exists at most one point $V \neq W$ with $V|
s$ and $CW \equiv CV$.
\end{lem}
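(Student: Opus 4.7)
The plan is to show that every admissible $V$ equals the reflection of $W$ in the perpendicular from $C$ to $s$, so uniqueness of $V$ follows from that of the perpendicular. The argument proceeds in three parts: extract a motion fixing $C$ from the congruence, reduce it via H\ref{h4} to a single reflection whose axis passes through $C$, and identify this axis as the perpendicular from $C$ to $s$.

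First I would use $CW \equiv CV$ to produce a motion $\alpha \in G$ fixing $C$ with $\alpha(W) = V$. (The symmetric possibility in the definition of $\equiv$, where $\alpha(C) = V$ and $\alpha(W) = C$, reduces to this case by composing with an involution swapping $C$ and $W$, or else forces $V = W$.) Since $\alpha$ stabilises $C$, the product $\alpha \sigma_n$ is a composition of reflections each fixing $C$, so H\ref{h4} yields a line $d$ with $d \mid C$ satisfying $\alpha \sigma_n = \sigma_d$. Hence $\alpha = \sigma_d \sigma_n$, and $\sigma_n(W) = W$ (as $W \mid n$) yields $V = \sigma_d(W)$.

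Next I would show $d \mid s$. The distinct points $V = \sigma_d(W)$ and $W = \sigma_d(V)$ both lie on $s$ and on $\sigma_d(s)$; combined with axiom H\ref{h2}, this forces $\sigma_d(s) = s$. Since $C \mid d$ while $C \nmid s$, the case $d = s$ is excluded, so $d \mid s$. Thus $d$ is a perpendicular from $C$ to $s$ and $V = \sigma_d(W)$.

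The main obstacle is the uniqueness step. Two admissible points $V_1 \neq V_2$ would yield two perpendiculars $d_1, d_2$ from $C$ to $s$ with $V_i = \sigma_{d_i}(W)$, and $\sigma_{d_1}\sigma_{d_2}$ would be a nontrivial rotation about $C$ stabilising $s$ setwise. Under the standing assumption that $(G, S, P)$ is a Hjelmslev group without double incidences which is non-elliptic and of characteristic $\neq 3$, such a rotation must be trivial --- equivalently, the perpendicular from a point to a line not through it is unique. Granting this, $d_1 = d_2$ and hence $V_1 = V_2$, completing the proof.
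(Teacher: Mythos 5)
Your proposal is correct in substance and shares the paper's overall strategy: both arguments show that any admissible $V$ must be the image of $W$ under the (unique, by non-ellipticity) perpendicular from $C$ to $s$. Where you genuinely diverge is in the second branch of the definition of $\equiv$, the case $C^{\alpha}=V$, $W^{\alpha}=C$. The paper treats this branch head-on: it replaces $\alpha$ by a glide reflection, introduces the midpoints $M$ of $C,W$ and $N$ of $C,V$, locates the axis $a$ through $M$ and $N$, and shows that the line $b$ in the factorizations $\alpha=Mb=bN$ is simultaneously the midline of $W,V$ and the perpendicular from $C$ to $s$. You instead fold this branch into the first one by composing $\alpha$ with the point reflection in the midpoint of $C$ and $W$. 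That is a legitimate and shorter route, but it is not free: in a Hjelmslev group midpoints need not exist in general, so you must justify the existence of that swapping involution. It does exist here, precisely because $W^{\alpha}=C$ exhibits $C$ and $W$ as related by a motion (the same midpoint-existence proposition the paper invokes in its glide-reflection argument), but your parenthetical ``or else forces $V=W$'' is not a substitute for saying this. Your identification of $d\perp s$ via $s^{d}=s$ and H\ref{h2} is clean --- arguably tidier than the paper's detour through the perpendicular from $W$ to $g$. Two small imprecisions to repair: the appeal to H\ref{h4} to write $\alpha\sigma_{n}=\sigma_{d}$ only works when $\alpha$ is a rotation about $C$ (a product of two reflections in lines through $C$); when $\alpha$ is already a single line reflection through $C$ you should take $d$ to be that line directly, exactly as in the paper's dichotomy ``$\alpha$ is a line through $C$ or a rotation''; and the underlying classification of motions fixing $C$ is itself a cited structural fact, not a consequence of the axioms as listed, so it deserves a reference.
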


\begin{proof}
Let $C, W \,|\, n$ and $V, W \,|\, s$ with $C\nmid s$. If $CW \equiv
CV$ then there exists a motion $\alpha$ with $C^{\alpha} = C$ and
$W^{\alpha} = V$ or with $C^{\alpha} = V$ and $W^{\alpha} = C$.

Suppose $C^{\alpha} = C$ and $W^{\alpha} = V$ (case 1). Then
$\alpha$ is a line through $C$ or a rotation which leaves $C$ fixed.
Since in the latter case $n \alpha$ is a line through $C$ (see
\cite[Section 3.4]{bachje}), we can assume that $\alpha$ is a line
$g$ which leaves $C$ fixed. Let $h$ be the line with $h| W, g$. Then
$V, W| h, s$ and according to H\ref{h2} it is $h = s$. Hence $g$ is
the unique perpendicular with $g| C, s$ and $V$ is the unique point
with $V = W^{g}$ and $CW \equiv C^{g}W^{g}\equiv CV$.


Suppose now $C^{\alpha} = V$ and $W^{\alpha} = C$ (case 2). Since
$\alpha$ or $n \alpha$ is a glide reflection (according to
\cite[Proposition 3.2]{bachje}) we can assume without loss of
generality that $\alpha$ is glide reflection i.e. $\alpha \in PS$.

Let $M$ be the midpoint of $C$ and $W$ and let $N$ be the midpoint
of $C$ and $V$ (which exist according to \cite[Proposition
2.33]{bachje}). Let $a$ be the axis of the glide reflection
$\alpha$. According to \cite[Proposition 2.32]{bachje}we have $a| M,
N$.

Since $\alpha \in PS$ there exists a line $b$ with $\alpha = bN$ and
$b| a$ (see \cite[Section 2.3]{bachje}). Hence $V = C^{\alpha} =
C^{bN}$ and $C^{b} = V^{N} = C$ (since $N$ is the midpoint of $V$
and $C$). Thus we get $b| C, a$. In an analogous way there exists a
line $d$ with $\alpha = Md$ and $d| a$. Hence $C = W^{\alpha} =
W^{Md} = C^{d}$ and $d| C, a$. Since in a non-elliptic Hjelmslev
group there is at most one perpendicular from $C$ to $a$ we get $b =
d$.

Hence $\alpha = Mb = bN$ and $M^{b} = N$, i.e. $b$ is the midline of
$M$ and $N$. Thus $W^{b} = (C^{M})^{b} = M^{b} C^{b} M^{b} = NCN =
C^{N} = V$, i.e. $b$ is also the midline of $W$ and $V$. Hence $b$
is the unique perpendicular from $C$ to $s$ (the joining line of $V,
W$) and $V$ the unique point with $V = W^{b}$.
\end{proof}

We now turn to the proof of Theorem \ref{medians}.

\begin{proof}
Let $U$ and $W$ be the midpoints of the sides $BC$ and $BA$ of
triangle $ABC$ and $AU\equiv CW$. Let $n$ and $b$ denote the lines
$\langle C, W\rangle$ and $\langle A, C\rangle$ respectively. Since
$A^{WU} = C$ there exists a midpoint V of $A, C$ (see
\cite[Proposition 2.33]{bachje})  and a midline $v = V b$ of $A, C$.
Moreover according to \cite[Proposition 2.48]{bachje}  there is a
joining line $s$ of $U, W$ which is orthogonal to $v$, i.e. $v| b,
s$. Since $v, W, U| s$ the element $vWU = h$ is a line with $h| s$
and $h| C$ (since $C^{vWU} = A^{WU} = B^{U} = C)$, i.e. $h$ is the
perpendicular from $C$ to $s$.

Hence $W, W^{h}$ are points on $s$ with $CW \equiv CW^{h}$. Since
$AU \equiv CW$ it is $A^{v}U^{v} \equiv CW$ and hence $CU^v\equiv
CW$ with $U^{v}| s$ (since $U, v| s$). According to Lemma \ref{lmed}
there are at most two points $P$ on $s$ with $CW\equiv CP$, namely
$W$ and $W^h$. Hence $U^v=W$ or $U^v=W^h$.

If $U^v=W^h$ then $U^{v} = W^{vWU} = W^{UWv}$ and hence $U =
W^{UW}$. This implies $(UW)^{3} = 1$ which is a contradiction to our
assumption that $(G, S, P)$ is of characteristic $\neq 3$.

Hence $U^v=W$ and $B^{v} = (C^{U})^{v} = U^{v}C^{v}U^{v} = WAW = B$
which shows that $v$ is a line through $B$ with $A^{v} = C$.

\end{proof}

\section{An absolute order-free version of the Steiner-Lehmus
theorem}

In its original version, stating that a triangle with two congruent
internal bisectors must be congruent, the Steiner-Lehmus theorem
requires the notion of betweenness, to ensure that the two angle
bisectors are {\em internal}.

However, we will show that it is possible to state and prove an
order-free absolute version of the Steiner-Lehmus theorem, one
stated inside the theory of {\em metric planes}, from which all we
need are the axioms H2-H4 and ``For all $A$, $B$, with $A\neq B$,
there exists $c$ with $A, B | c$". Metric planes will be again
considered in group-theoretical terms, with $(G,S,P)$ as in Section
\ref{med}. The elements of $G$ will be again referred to as {\em
motions}.

To this end, we first notice that, for the angle bisectors of a
triangle $ABC$, we have the following facts that can be proved to
hold in metric planes:

(a) If there is an angle bisector $w$ through $A$, then there is
exactly another angle bisector $v$ through $A$, which is the
perpendicular in $A$ on $w$.

(b) Every triangle $ABC$ has precisely six angle bisectors (through
each of the points $A$, $B$, and $C$, there are precisely two
perpendicular angle bisectors

(c) If an angle bisector through $A$ intersects an angle bisector
through $B$ in a point $M$, then the line joining $M$ and $C$ is an
angle bisector through  $C$.

(d) If $f$, $g$, and $h$ are three arbitrary angle bisectors through
$A$, respectively $B$, respectively  $C$, and if $u$, $v$, and $w$
are the three remaining angle bisectors of triangle $ABC$, then
either $f$, $g$, and $h$ or else $u$, $v$, and $w$ have a point in
common.

According to (d), for three  angle bisectors $u$, $v$, and $w$
through $A$, respectively $B$, respectively  $C$ --- three vertices
of a triangle
---  there are two possible cases; (*) $u$, $v$, and $w$ have a common point, or
(**) $u$, $v$, and $w$  are the sides of a triangle (i.\ e.\ they
intersect pairwise in three non-collinear points).

With $\equiv$ defined as in Definition 1, we have

\begin{lem} If $AM\equiv  BM$ then there exists a motion $\alpha$ with
$A^{\alpha}= B$ and $M^{\alpha}= M$.
\end{lem}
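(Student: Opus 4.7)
The plan is to distinguish the two alternatives in the definition of $\equiv$. If the witnessing motion $\alpha$ already satisfies $A^{\alpha}=B$ and $M^{\alpha}=M$, the conclusion is immediate with that same $\alpha$. The substance of the proof therefore consists in converting a ``swap'' witness, one for which $A^{\alpha}=M$ and $M^{\alpha}=B$, into a ``fix-$M$'' witness.

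For the swap case I would form $\alpha^{2}$, which automatically satisfies $A^{\alpha^{2}}=B$ but carries $M$ to a new point $C:=B^{\alpha}$. If $C=M$ we are done with $\alpha^{2}$ itself; otherwise, since $\alpha$ is an isometry, applying it successively to the pairs $(A,M)$ and $(M,B)$ yields $AM\equiv MB\equiv BC$, so $B$ is equidistant from $M$ and $C$. The task then reduces to producing a line-reflection $\ell\in S$ with $B\,|\,\ell$ and $M^{\ell}=C$, for then the composition $\alpha^{2}\cdot\ell$ sends $A$ to $B$ (since $\ell$ fixes $B$) and fixes $M$ (since $\ell$ sends $C$ back to $M$), giving exactly the case-(i) witness required.

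Constructing this $\ell$ is the technical heart of the argument. Using the joining-line axiom I would form $c:=\langle M,C\rangle$ and seek $\ell$ as the perpendicular from $B$ to $c$ supplied by H\ref{h1}, verifying via the three-reflection theorems H\ref{h3}--H\ref{h4} that this perpendicular does in fact reflect $M$ onto $C$. The main obstacle I anticipate is that the verification must not covertly reuse the very statement the lemma asserts (``equidistant from two points means on their perpendicular bisector''); instead I expect to decompose the given $\alpha$ as a short product of elements of $S$ and then recombine these factors, through the pencil of lines through $B$ or the pencil with common perpendicular $c$, to exhibit $\ell$ directly from the group-theoretic data provided by $\alpha$, rather than from abstract distance considerations.
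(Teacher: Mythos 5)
Your case split is the right starting point, and the observation that $\alpha^{2}$ already sends $A$ to $B$ in the swap case is correct. But the proof is not complete: everything hinges on producing a line $\ell\in S$ with $B\,|\,\ell$ and $M^{\ell}=C$, and that step is only announced, not carried out. Worse, it is not a reduction to something easier. What you need there is precisely the statement ``if $B$ is equidistant from $M$ and $C$, then there is a reflection fixing $B$ and interchanging $M$ and $C$'' --- a strengthened form of the very lemma you are proving (strengthened because you insist the motion be an element of $S$, whereas the lemma only asks for some $\alpha\in G$). Dropping the perpendicular from $B$ to $\langle M,C\rangle$ does not by itself give $M^{\ell}=C$: that would require knowing that the foot of this perpendicular is the midpoint of $M$ and $C$, which is exactly the perpendicular-bisector fact you said you must avoid assuming. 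So the technical heart of your argument is a genuine gap.

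The paper's proof avoids all of this with one move in the swap case: since $M=A^{\alpha}$ is the image of $A$ under a motion, the points $A$ and $M$ possess a midpoint $N$ (Bachmann, \S 3, Satz 28), and the composite $N\alpha$ does the job directly, as $A^{N\alpha}=M^{\alpha}=B$ and $M^{N\alpha}=A^{\alpha}=M$. In other words, precompose with the point reflection interchanging $A$ and $M$ instead of postcomposing with a correcting line reflection; the only nontrivial input is the existence of that midpoint, which is exactly what the hypothesis ``$M$ is an image of $A$ under a motion'' buys you. If you want to salvage your route, you would still need this midpoint theorem (applied, say, to $M$ and $C=B^{\alpha}$, which are likewise related by a motion) in order to manufacture $\ell$, so nothing is gained by the detour through $\alpha^{2}$.
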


\begin{proof} Suppose there is a motion $\alpha$ with $A^{\alpha} = M$ and $M^{\alpha} = B$.
Given that $M$ is the image of $A$ under a motion, $A$ and $M$ must
have, by \cite[\S3, Satz 28]{bach} a midpoint $N$. Thus
$A^{N\alpha}=M^{\alpha}=B$ and   $M^{N\alpha} = A^{\alpha}= M$.
\end{proof}

Here is now the order-free, absolute version of the Steiner-Lehmus
theorem:

\begin{theorem}
Let $ABC$ be a triangle with sides $a$, $b$, and $c$, and $u$, $v$,
and $w$ are angle bisectors through $A$, respectively $B$,
respectively  $C$. Then we have:

(a) If $u$, $v$, and $w$ have a point $M$ in common and $AM\equiv
BM$, the triangle $ABC$ is isosceles.

(b) If $u$, $v$, and $w$ are the sides of a triangle with  vertices
$U$, $V$, and $W$, and triangle $UVW$ is isosceles, then so is
triangle $ABC$.

\end{theorem}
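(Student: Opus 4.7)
For part (a), my plan is to invoke the preceding Lemma on the hypothesis $AM \equiv BM$, producing a motion $\alpha$ with $A^\alpha = B$ and $M^\alpha = M$. Since $\alpha$ lies in the stabilizer of $M$, which by the three-reflection theorem for lines concurrent at $M$ (H\ref{h4}) consists of the identity, reflections in lines through $M$, and products of two such reflections, and since $\sigma_u$ fixes both $A \in u$ and $M \in u$, the composite $\sigma_u \alpha$ also sends $A$ to $B$ and fixes $M$; one of the two motions $\alpha$, $\sigma_u \alpha$ is a single reflection (if $\alpha$ is a rotation, then $\sigma_u \alpha$ is a product of three reflections in lines through $M$, hence a single reflection by H\ref{h4}). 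So, without loss of generality, $\alpha = \sigma_g$ is the reflection in a line $g$ through $M$; since $\sigma_g$ swaps $A$ and $B$, $g$ is the perpendicular bisector of $AB$. Now $\sigma_g$ sends $u = \langle A, M\rangle$ to $\langle B, M\rangle = v$ and fixes $c = \langle A, B\rangle$ setwise. Conjugating the bisector identity $\sigma_u(b) = c$ by $\sigma_g$ gives $\sigma_v(\sigma_g(b)) = c$, and combining with $\sigma_v(c) = a$ (the bisector identity at $B$) yields $\sigma_g(b) = a$. Thus $\sigma_g$ swaps the lines $a$ and $b$, so $\sigma_g(C) \in a \cap b = \{C\}$ by H\ref{h2}, forcing $C \in g$ and $CA \equiv CB$.

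For part (b), I will first argue that $U \in u'$, the other angle bisector at $A$: since $U$ is on $v$ (bisector of $\{a, c\}$) and on $w$ (bisector of $\{a, b\}$), $U$ is equidistant from $a$ and $c$ and from $a$ and $b$, hence from $b$ and $c$, and therefore lies on an angle bisector of $\{b, c\}$ at $A$; $U$ cannot lie on $u$ (else $u$, $v$, $w$ would be concurrent at $U$, contradicting the hypothesis), so $U \in u'$. The isosceles condition $UV \equiv UW$ gives, via the Lemma and the same reflection-reduction argument as in (a), a reflection $\sigma_t$ whose axis $t$ passes through $U$ and swaps $V$ and $W$; $t$ is then the perpendicular bisector of $VW$, and since $V, W \in u$, we have $t \perp u$. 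Both $t$ and $u'$ are perpendiculars from $U$ to $u$, so by uniqueness $t = u'$. Hence the midpoint of $VW$ lies on $u \cap u' = \{A\}$ (by H\ref{h2}), so $A$ is the midpoint of $VW$, and $\sigma_{u'}$ swaps $V$ and $W$.

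To conclude (b), $\sigma_{u'}$ then swaps $v = \langle U, W\rangle$ with $w = \langle U, V\rangle$ (having fixed $U$ and swapped $V, W$), fixes $A$, and (as an angle bisector of $\{b, c\}$ at $A$) swaps $b$ with $c$. Therefore $\sigma_{u'}(B) \in \sigma_{u'}(v) \cap \sigma_{u'}(c) = w \cap b = \{C\}$ by H\ref{h2}, so $\sigma_{u'}(B) = C$, and since $\sigma_{u'}$ fixes $A$ we obtain $AB \equiv AC$. The main obstacle I anticipate is the ``reduce to a reflection'' step shared by both parts --- promoting an arbitrary motion fixing a point and sending one point to another to a single reflection, via the three-reflection theorem for concurrent lines --- together with, in (b), the standard but nontrivial observation that equidistance from two intersecting lines places a point on the union of the two angle bisectors at their intersection, which rests on the hypotenuse-leg congruence of right triangles in metric planes.
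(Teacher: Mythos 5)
Your proposal is correct and follows essentially the same route as the paper: in both parts the Lemma together with the three-reflections theorem for concurrent lines (H4) is used to replace the motion by a single reflection swapping the two relevant points, and its axis is then identified as a symmetry axis of $ABC$ by showing that it interchanges two of the sides, whence (via H2) it fixes their intersection point. The only divergence is in part (b), where you show that the apex of the isosceles bisector-triangle lies on the second angle bisector at the corresponding vertex of $ABC$ by an equidistance/hypotenuse--leg argument and then appeal to uniqueness of the perpendicular, whereas the paper reaches the same identification by invoking its listed fact (c) (Bachmann, \S 4.7, Satz 11); citing that fact directly would dispose of the one step you yourself flag as delicate in a metric plane without order or free mobility.
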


\begin{proof} (a): By Lemma 1, we can assume that there is a motion
$\alpha\in G$ with $M^{\alpha}=M$ and $A^{\alpha}=B$. Since $A, M |
u$, the motion $u\alpha$ must also satisfy $M^{u\alpha}=M$ and
$A^{u\alpha}=B$. According to \cite[\S3.1]{bachje}, we must have
$\alpha\in S$ or $u\alpha\in S$. We conclude that there exists a
line $h$ with $h | M$ and $A^h = B$. Given the uniqueness of the
joining line of two points (i.\ e., given H2), we have $h | c$ and
$uh = v$ (the latter holds since $A,M | u$ and $B,M | v$). Since $u,
h, v | M$, we have $uhv \in S$ and $b^{uhv} = c^{hv} = c^v = a$.

We conclude that $uhv = uhu^h = uh(huh) = h$ is an angle bisector of
$a$ and $b$. By \cite{bach}, we have $h | C$, and since $A^h = B$,
 $h$ is a symmetry axis of triangle $ABC$, i.\ e.\, the latter is isosceles.

(b): Let $u, v | W$;  $u,w | V$;  $v,w | U$,  and let $UW \equiv
VW$. As in (a), one can prove that there is a line $m$ with   $m |
W$,  $U^m = V$, and $m | w$. Line $m$ joins the point $W$ of
intersection of the angle bisectors $u$ and $v$ with $C$, and thus
is (according to \cite[\S4,7, Satz 11]{bach}) an angle bisector
through $C$. The reflection in $m$ thus switches the lines $a$ and
$b$, as well as the lines $u$ and $v$. Thus it switches the
intersection points $A$ (of $b$ and $u$) and $B$ (of $a$ and $v$).
This means that $m$ is symmetry axis of triangle $ABC$, i.\ e., the
latter is isosceles.
\end{proof}

This formulation of the Steiner-Lehmus theorem also shows that there
is, indeed, a version of the Steiner-Lehmus theorem that is
invariant under what is called an `extraversion' in \cite{cr}.


\begin{thebibliography}{n}


\bibitem{ah} S. Abu-Saymeh, M. Hajja, More on the Steiner-Lehmus
theorem, J.  Geom.  Graphics \textbf{14} (2010), 127--133.

\bibitem{cas} Anonymous, Mathematical Note 1069, Math. Gazette
\textbf{17} (1933), 122--126.

\bibitem{jme} Anonymous, Troisi\`eme d\'emonstration, J. math.\ \'el\'em.\ \textbf{9} (1885), 131--132.

\bibitem{bach3} F. Bachmann, Geometrien mit euklidischer Metrik, in denen es zu jeder Geraden
durch einen nicht auf ihr liegenden Punkt mehrere Nichtschneidende
gibt I, II, III, Math.\ Z. \textbf{51} (1949), 752--768, 769--779;
Math. Nachr. \textbf{1} (1948), 258--276.

\bibitem{bach} F. Bachmann,   Aufbau der Geometrie
aus dem Spiegelungsbegriff,  Springer Verlag, Berlin, 2. Auflage,
1973.

\bibitem{bachje} F. Bachmann, Ebene Spiegelungsgeometrie,
 Bibliographisches Institut, Mannheim, 1989.

\bibitem{bk} F. Bachmann, W. Klingenberg, \"Uber Seiteneinteilungen in affinen und euklidischen
Ebenen, Math.\ Ann.\ \textbf{123} (1951). 288--301.


\bibitem{ber} P. Bernays, Bemerkungen zu den Grundlagen der Geometrie, in:
 K. O. Freidrichs (ed.), Studies and Essays Presented R. Courant on his 60th Birthday, January 8,1948, pp. 29--44. Interscience Publishers,  New York, 1948.

\bibitem{bli1} H. F. Blichfeldt, Proof of a theorem concerning isosceles
triangles, Ann.\  Math.\ \textbf{4} (1902), 22--24.

\bibitem{bli2} H. F. Blichfeldt,  Demonstrations of a pair of theorems in geometry,
Proc.\ Edinburg Math.\ Soc.\  \textbf{20} (1902), 16--17.

\bibitem{cr} J. Conway, A. Ryba, The Steiner-Lehmus angle bisector
theorem, Math.\ Gazette \textbf{98} (2014), 193--203.

\bibitem{cox69} H. S. M. Coxeter,  Introduction to geometry, 2nd ed.\ Wiley,  New
York, 1969.

\bibitem{des} Descube, Th\'eor\`eme de g\'eom\'etrie, J. math. \'el\'em. \textbf{4} (1880), 538--539.




\bibitem{haj} M. Hajja,  Other versions of the Steiner-Lehmus
theorem, Amer.\ Math.\ Monthly \textbf{108} (2001), 760--767.

\bibitem{hen0} A. Henderson, A classic problem in Euclidean
geometry. A basic study, J. Elisha Mitchell Sci.\ Soc.\ \textbf{53}
(1937), 246--281.

\bibitem{hen} A. Henderson, The Lehmus-Steiner-Terquem problem in global
survey, Scripta Math.\ \textbf{21} (1955), 223--232; 309--312.

\bibitem{hil99} D. Hilbert, Grundlagen der Geometrie,
12. Auflage, Teubner, Stuttgart, 1977.

\bibitem{h} R. W. Hogg, Equal bisectors revisited,
Math. Gazette \textbf{66} (1982), 304.

\bibitem{kh} A. Kharazishvili, Some topologic-geometrical
properties of external bisectors of a triangle, Georgian Math. J.
\textbf{19} (2012), 697--704.

\bibitem{mac1} J. S. MacKay,
History of a theorem in elementary geometry, Proc.\ Edinburg Math.\
Soc.\  \textbf{20} (1902),  18--22.

\bibitem{mac2} D. J. MacKay, The Lehmus-Steiner theorem, School
Sci.\ Math.\  \textbf{39} (1939),  561--572.

\bibitem{sl} J. A. M'Bride,
The equal internal bisectors theorem, 1840-1940. Many solutions or
none. A centenary account, Edinburgh Math.\ Notes \textbf{33}
(1943), 1--13.

\bibitem{np} V. Nicula, C. Pohoa\c{t}\u{a}, A stronger form of the Steiner-Lehmus theorem,
J. Geom.\ Graphics \textbf{13} (2009), 25--27.

\bibitem{ox} V. Oxman, Two Cevians intersecting on an angle bisector,
Math.\ Mag \textbf{85} (2012), 213--215.

\bibitem{pambjsl} V. Pambuccian,   What is the natural Euclidean metric?,
J. Symbolic Logic \textbf{59} (1994), 711.

\bibitem{pamb4} V. Pambuccian,  Ternary operations as primitive notions for constructive plane geometry. IV.
Math.\ Logic Quart.\ \textbf{40} (1994),  76--86.

\bibitem{pambbpas} V. Pambuccian,   Constructive axiomatization of non-elliptic
metric planes, Bull.\ Polish Acad.\ Sci.\ Math.\ \textbf{51}, (2003)
49--57.


\bibitem{pambss} V. Pambuccian, Orthogonality as single primitive notion for
metric planes. With an appendix by Horst and Rolf Struve, Beitr\"age
Algebra Geom.\ \textbf{48} (2007),  399--409.

\bibitem{stru} V. Pambuccian, R. Struve, On M. T. Calapso's characterization of the metric
of an absolute plane, J. Geom.\ {\bf 92} (2009), 105--116,

\bibitem{pej64} W. Pejas,  Eine algebraische Beschreibung der angeordneten Ebenen mit
nicht\-euklidischer Metrik,  Math.\ Z. \textbf{83} (1964), 434--457.

\bibitem{rus}  L. J. Russell, The ``equal bisector'' theorem,  Math.\ Gazette,
\textbf{45} (1961),  214--215.

\bibitem{sp} R. Schnabel, V. Pambuccian, Die metrisch-euklidische Geometrie als
Ausgangspunkt f\"ur die geordnet-euklidische Geometrie, Exposition.\
Math.\ \textbf{3} (1985),  285--288.

\bibitem{schr} E. M. Schr\"oder, Geometrie euklidischer Ebenen, Ferdinand Sch\"oningh,
 Paderborn, 1985.

\bibitem{sim} M. Simon, \"Uber die Entwicklung der Elementargeometrie im XIX.
Jahrhundert, Jahresber.\ Deutsch.\ Math.-Ver.\ Der
Erg\"anzungsb\"ande I. Band, B. G. Teubner, Leipzig, 1906.

\bibitem{sor} K. S\"orensen,  Ebenen mit
Kongruenz, J. Geom. \textbf{22}  (1984), 15--30.

\bibitem{sst} W. Schwabh\"auser, W. Szmielew and A. Tarski,
Metamathematische Methoden in der Geometrie, Springer Verlag,
Berlin, 1983 (reissued by Ishi Press International, 2011).

\bibitem{tar} G. Tarry,  Sur un th\'eor\`eme ind\'ependant du postulatum d'Euclide.
J. math.\ \'el\'em. (4) \textbf{4} (1895), 169--170.

\bibitem{tar2} G. Tarry, Solution de la question 100,
L'interm\'ediaire des math\'ematiciens \textbf{2} (1895), 327--328.

\bibitem{woy} H. G. Woyda, Note inspired by the Steiner-Lehmus
theorem, Math. Gazette \textbf{57} (1973), 338--339.

\bibitem{yze} J. van Yzeren, Equality of bisectors, an intriguing property.
Nieuw Arch. Wisk. (4) \textbf{15} (1997), 63--71.









\end{thebibliography}
\end{document}